\newtheorem{thm}{Theorem}[section]
\newtheorem{lem}[thm]{Lemma}
\newtheorem{cor}[thm]{Corollary}
\newtheorem{claim}[thm]{Claim}
\newtheorem{assu-nota}[thm]{Assumption--Notation}
\theoremstyle{definition}
\newtheorem{rem}[thm]{Remark}
\newcommand{\inv}{^{-1}}
\newcommand{\into}{\hookrightarrow}
\newcommand{\Z}{\mathbb Z}
\newcommand{\pp}{\mathbb P}
\newcommand{\OO}{\mathcal O}
\DeclareMathOperator{\Pic}{Pic}
\DeclareMathOperator{\Id}{Id}
\newcommand{\bbP}{\mathbb{P}}
\newcommand{\bbZ}{\mathbb{Z}}
\newcommand{\epsi}{\varepsilon}
\numberwithin{equation}{section}
\title{Abelian varieties in Brill--Noether loci}
\author{Ciro Ciliberto}
\address{Dipartimento di Matematica, II Universit\`a di Roma,
Italy}
\email{cilibert@axp.mat.uniroma2.it}
\author{Margarida Mendes Lopes}
\address{Departamento de Matem\'atica, Instituto Superior T\'ecnico, Universidade T\'ecnica de Lisboa Av. Rovisco Pais,
1049-001 Lisboa, Portugal}
\email{ mmlopes@math.ist.utl.pt}
\author{Rita Pardini}
\address{Dipartimento di Matematica Universit\`a di Pisa\\
Largo B. Pontecorvo, 5,  56127 Pisa, Italy }
\email{pardini@dm.unipi.it}
\thanks{{\it Mathematics Subject Classification (2010)}. Primary 14H40. Secondary:
 	14H51\\
The first and third authors are members of GNSAGA of INdAM, and this research was partially supported by the MIUR--PRIN 2010
``Geometria delle variet\`a algebriche''. The second author is a member of the Center for Mathematical
Analysis, Geometry and Dynamical Systems (IST/UTL).    This research was partially supported by FCT (Portugal) through program POCTI/FEDER and
Project PTDC/MAT/099275/2008.}
\begin{document}
\begin{abstract} 
In this paper, improving on results in \cite {ah, df}, we give the full classification of curves $C$ of genus $g$ such that a Brill--Noether locus $W^ s_d(C)$, strictly contained in the jacobian $J(C)$ of $C$, contains a variety $Z$ stable under translations by the elements of a positive dimensional abelian subvariety $A\subsetneq J(C)$ and such that $\dim(Z)=d-\dim(A)-2s$, i.e., the maximum possible  for such a $Z$.
\medskip

\end{abstract} 
\maketitle
\tableofcontents

\section{Introduction}

In  \cite {ah} the authors posed the  problem of studying, and possibly classifying, situations like this:
\begin{itemize}
\item [(*)] $C$ is a smooth, projective, complex curve of genus $g$,  $Z$ is an irreducible $r$--dimensional subvariety of a Brill--Noether locus $W^ s_d(C)\subsetneq J^ d(C)$, and $Z$ is stable under translations by the elements of an abelian subvariety $A\subsetneq J(C)$ of dimension $a>0$ (if so, we will say that $Z$ is $A$--\emph{stable}).
\end{itemize}

Actually in \cite {ah} the variety $Z$ \emph{is} the translate of a positive dimensional  proper abelian subvariety of $J(C)$, while the above slightly more general formulation  was given in \cite {df}. 

 The motivation for studying  (*) resides, among other things,  in a theorem of Faltings (see \cite {f}) to the effect that if $X$ is an abelian variety defined over a number field $\mathbb K$, and $Z\subsetneq X$ is a subvariety  not containing any translate of a positive dimensional abelian subvariety of $X$, then the number of rational points of $Z$ over $\mathbb K$ is finite. The idea in  \cite {ah} was to apply Faltings' theorem to the $d$--fold symmetric product $C(d)$ of a curve $C$ defined over a number field $\mathbb K$.
If $C$ has no positive dimensional linear series of degree $d$, then $C(d)$ is isomorphic to its Abel--Jacobi image $W_d(C)$ in $J^ d(C)$. Thus $C(d)$ has finitely many rational points over $\mathbb K$ if $W_d(C)$ does not contain any translate of a positive dimensional abelian subvariety of $J(C)$. The suggestion in \cite {ah} is that, if, by contrast, $W_d(C)$ contains the translate of a positive dimensional abelian subvariety of $J(C)$, then $C$ should be \emph{quite special}, e.g., it should admit a map to a curve of lower positive genus (curves of this kind clearly are in situation (*)). This idea was tested in \cite {ah}, where a number of partial results  were proven for low values of $d$.

The problem was taken up in \cite {df}, see also \cite{df-errata},  where, among other things, it is proven that if (*) holds, then $r+a+2s\leqslant d$, and, if in addition $d+r\leqslant g-1$, then $r+a+2s= d$ if and only if:
\begin{itemize}
\item [(a)]  there is a degree 2 morphism $\varphi\colon C\to C'$, with $C'$ a smooth curve of genus $a$, such that 
$A=\varphi^*(J(C'))$ and $Z=W_{d-2a-2s}(C)+\varphi^*(J^{a+s}(C'))$.
\end{itemize}

In \cite {df} there is also the following example with $(d,s)=(g-1,0)$:
\begin{itemize}
\item [(b)]  there is an (\'etale) degree 2 morphism $\varphi\colon C\to C'$, with $C'$ a smooth curve of genus $g'=r+1$, $A$ is  the  Prym variety of $\varphi$ and $Z\subset W_{g-1}(C)$ is the connected component of $\varphi_*\inv(K_{C'})$ consisting of  divisor classes $D$ with $h^0(\mathcal O_C(D))$ odd, where $\varphi_*\colon J^{g-1}(C)\to J^{g-1}(C')$ is the \emph{norm map}.  One has $Z\cong A$, hence $r=a$.\end{itemize}
One more family of examples is the following (see Corollary \ref {for:ten} below):
\begin{itemize}
\item [(c)]  $C$ is hyperelliptic, there is a degree 2 morphism $\varphi\colon C\to C'$ with $C'$ a smooth curve of genus $a$ such that $g>2a+1$,  $A=\varphi^*(J(C'))$, $0<s<g-1$ and $Z=\varphi^*(J^{a}(C'))+W_{d-2s-2a}(C)+ W^ s_{2s}(C)$ (notice  that $W^s_{2s}(C)$ is a point). 
\end{itemize}

The aforementioned result in \cite {df} goes exactly in the direction indicated in \cite {ah}. The unfortunate feature of it is the hypothesis $d+r\leqslant g-1$ which turns out to be \emph{quite strong}. To understand how strong it is, consider the case $(d,s)=(g-1,0)$, which is indeed the crucial one (see \cite [Proposition 3.3] {df} and \S \ref {ssec:GBN} below) and in which Debarre--Fahaloui's theorem is void. 

The aim of the present paper is to give the full classification of the cases in which (*) happens and $d=r+a+2s$. What we prove (see Theorem \ref{thm:main} and Corollary \ref {for:ten}) is that if (*) holds then, with no further assumption, either (a) or (b) or (c) occurs. 

The idea of the proof is not so different, in principle, from the one proposed in \cite {df} in  the restricted situation considered there. Indeed, one uses the $A$--stability of $Z$ and its maximal dimension  to produce linear series on $C$ which are not birational, in fact  composed with a degree 2 irrational involution. The main tool in  \cite {df}, inspired by \cite {ah}, is a Castelnuovo's type of analysis for  the growth of the dimension of certain linear series.

Our approach also consists  in producing a non birational linear series on $C$, but it is in a sense more direct. We consider  (*) with $(d,s)=(g-1,0)$ and $a+r=g-1$, i.e., the basic case (the others follow from this), in which $Z$ is contained in $W_{g-1}(C)$, which is a translate of the theta divisor $\Theta\subset J(C)$. This
 immediately produces, restricting to $Z$  the Gauss map of $\Theta$, a base point free sublinear series $L$ of dimension $r$ of the canonical series of $C$. It turns out that $Z$ is birational to an irreducible component of the variety $C(g-1,L)\subset C(g-1)$ consisting of all  effective divisors of $C$ of degree $g-1$ contained in some divisor of $L$. The $A$--stability of $Z$ implies that $C(g-1,L)$ has some other component besides the one birational to $Z$, and this forces $L$ to be non--birational. Once one knows this, a (rather subtle) analysis of the map determined by $L$ and of its image leads to the conclusion. 

As for the contents of the paper, \S \ref {sec:symm} is devoted to a few general facts about the varieties $C(k,L)\subset C(k)$ of effective  divisors of degree $k$ contained in a divisor of a linear series $L$ on a curve $C$. These varieties, as we said,  play a crucial role in our analysis. Section \ref {sec:absub} is devoted to the proof of our main result. 

One final word about our own interest in this problem, which is quite different from the motivation of \cite {ah, df}. It is in fact  related to the study of irregular surfaces $S$ of general type, where situation (*) presents itself in a rather natural way. For example, let $C\subset S$ be a smooth,  irreducible curve, and assume that $C$ corresponds to the general point of an  irreducible component $\mathcal C$ of the Hilbert scheme of curves on $S$ which dominates $\Pic^ 0(S)$.  Recall that there is also only one irreducible component $\mathcal K$ of the Hilbert scheme of curves algebraically equivalent to canonical curves on $S$ which dominates $\Pic^ 0(S)$ (this is called the \emph{main paracanonical system}). The curves of $\mathcal C$ cut out on $C$ divisors which are residual, with respect to $\vert K_C\vert$, of divisors cut out by curves in  $\mathcal K$. Consider now the one of the two systems  $\mathcal C$ and $\mathcal K$ whose curves cut on $C$ divisors of minimal degree $d=\min\{C^ 2, K_S\cdot C\}$, and denote by $s$ the dimension of the general fibre of this system over $\Pic^ 0(S)$. Thus the image of 
the  natural  map $\Pic^ 0(S)\to  J^d(C)$ defined by $\eta\mapsto \OO_C(C+\eta)$  (or by $\eta\mapsto \OO_C(K_S+\eta)$) is a $q$--dimensional abelian variety  contained in $W^s_d(C)$, which is what happens in (*). 
 Thus  understanding  (*) would provide us with the understanding of (most) curves on irregular surfaces. 

The results in this paper, even if restricted to the very special case of (*)  in which $Z$ has maximal dimension, turn out to be useful in surface theory. For example, if $S$ is a minimal irregular  surface of general type, then  $K^ 2_S\geqslant 2p_g$ (see \cite {debarre}). Using the results in this paper we are able to classify irregular minimal surfaces for which $K^ 2_S= 2p_g$. 
This classification  is given   in \cite{cmp}.

\section{Linear series on curves and related subvarieties of symmetric products}\label{sec:symm}

\subsection{Generalities} Let $C$ be  a smooth, projective, irreducible   curve of genus $g$. For an integer $k\geqslant 0$, we denote by $C(k)$ the $k$-th symmetric product of $C$ (by convention, $C(0)$ is a point). 
 
 Let $L$ be a base point free $g^r_d$ on $C$. We denote the corresponding morphism by $\phi_L\colon C\to \bar C \subset \bbP^ r$ and by  $C'$  the normalization of  $\bar C$. We let $f\colon C\to C'$ be the induced morphism and  $L'$ the linear series on $C'$ such that $L=f^ *(L')$. We set $\deg(f)=\nu\ge 1$, so that $d=\delta \nu$, with $\delta=\deg(\bar C)$. We say that $L$ is \emph{birational} if  $\phi_L\colon C\to \bar C$ is birational.

 Let $k\leqslant d$ be a positive integer. We consider the incidence correspondence
 $$\mathcal C{(k,L)} =\{(D,H)\in C(k)\times L \ | \ D\leqslant H\}$$
 with projections $p_i$ (with $1\leqslant i\leqslant 2$) to the first and second factor. Set 
 $C(k,L)=p_1(\mathcal C(k,L))$, which has a natural scheme structure (see \cite {c83} or \cite [p. 341]  {acgh}).  
  Note the isomorphism 
 \[\mathfrak s\colon  (D,H)\in \mathcal C{(k,L)}\to (H-D,H)\in \mathcal C{(d-k,L)}.\]

   \begin{lem}\label{lem:irreducible} 
    \begin{inparaenum}
   \item If $k\leqslant r$, then $C(k,L)=C(k)$.\\
   \item If $k\geqslant r$, then $C(k,L)$ has pure dimension $r$. If $V$ is an irreducible component of 
   $C(k,L)$, there is a unique component $\mathcal V$ of $\mathcal C(k,L)$ dominating $V$ via $p_1$ and $p_1$ induces a birational morphism 
   of $\mathcal V$ onto $V$. Finally $\mathcal V$ dominates $L$ via $p_2$.\\
   \item If $\min\{k,d-k\}\geqslant r$ the isomorphism $\mathfrak s$ induces a map $\mathfrak r\colon  C(k,L)\dasharrow C(d-k,L)$, which is componentwise birational.\\
      \item If $L$ is birational, then $C(k,L)$ is irreducible.
   \end{inparaenum}
 \end{lem}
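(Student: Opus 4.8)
My plan is to reduce the irreducibility of $C(k,L)$ to the transitivity of a monodromy action, exploiting the parts of the lemma already proved. I would first dispose of the range $k\leqslant r$: there part (i) gives $C(k,L)=C(k)$, and since $C$ is irreducible its $k$-th symmetric product $C(k)$ is irreducible, so nothing remains to check. For the rest assume $k\geqslant r$, so that part (ii) is available.

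By part (ii), every irreducible component $V$ of $C(k,L)$ is dominated, birationally through $p_1$, by a \emph{unique} component $\mathcal V$ of $\mathcal C(k,L)$, and this $\mathcal V$ in turn dominates $L$ through $p_2$. Since $V=\overline{p_1(\mathcal V)}$, distinct components $V$ produce distinct components $\mathcal V$; hence the number of components of $C(k,L)$ is bounded by the number of components of $\mathcal C(k,L)$ that dominate $L$. Thus it suffices to show that $\mathcal C(k,L)$ has exactly one component dominating $L$.

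To count these I would restrict $p_2\colon \mathcal C(k,L)\to L$ over the open set $U\subseteq L$ parametrizing reduced divisors. As $L$ is base point free, a general $H\in L$ consists of $d$ distinct points and $p_2\inv(H)$ is the finite set of $k$-element subsets of $\operatorname{supp}(H)$; so $p_2\inv(U)\to U$ is a finite covering of the smooth (hence irreducible) variety $U$, and its components correspond bijectively to the orbits of the monodromy group $G\subseteq S_d$ acting on the $k$-subsets of a set of $d$ elements. Passing to closures, the components of $\mathcal C(k,L)$ dominating $L$ are in bijection with these orbits.

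The crux is the identification of $G$, and this is where the hypothesis that $L$ is birational enters. In that case $\phi_L$ maps $C$ birationally onto an irreducible nondegenerate curve $\bar C\subset\bbP^r$ of degree $d$, and a general $H\in L$ is carried bijectively by $\phi_L$ onto a general hyperplane section of $\bar C$, so $G$ equals the monodromy group of such a section. By the Uniform Position Theorem (see \cite{acgh}), in characteristic zero this monodromy group is the full symmetric group, whence $G=S_d$. Since $S_d$ is transitive on the $k$-subsets of a $d$-element set, there is a single orbit, hence a single component of $\mathcal C(k,L)$ dominating $L$, and therefore $C(k,L)$ is irreducible. I expect this appeal to uniform position to be the only substantial point; the remainder is the bookkeeping, supplied by part (ii), that matches components of $C(k,L)$, dominating subcorrespondences, and monodromy orbits.
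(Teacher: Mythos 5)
Your argument for part (iv) is correct, and it is in substance the paper's own proof: the paper disposes of (iv) with a one-line appeal to the Uniform Position Theorem (\cite[p.~112]{acgh}), and your monodromy discussion is exactly the expansion of that citation --- over the locus $U$ of reduced divisors of $L$ the correspondence is a finite covering of $U$, the components of $\mathcal C(k,L)$ dominating $L$ correspond to orbits of the monodromy group on $k$-element subsets, and the birationality of $\phi_L$ identifies this group with the monodromy group of a general hyperplane section of the irreducible nondegenerate curve $\bar C\subset \bbP^r$, which in characteristic zero is the full symmetric group, hence transitive on $k$-subsets. The bookkeeping (injectivity of $V\mapsto \mathcal V$, reduction to the components dominating $L$) is also sound.

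The gap is one of coverage: the statement has four parts and you prove only (iv). You invoke (i) and (ii) as ``already proved,'' but nothing in the problem supplies them, and (ii) is precisely the load-bearing input of your counting step. The paper's proof of (ii) is short but not vacuous: the pure $r$-dimensionality of $C(k,L)$ is quoted from \cite[\S~1]{c83} or \cite[Lemma (3.2), p.~342]{acgh}; then, since each fibre of $p_1$ is a linear subseries of $L$ (hence an irreducible projective space) and $p_2$ is finite, no component of $\mathcal C(k,L)$ has dimension exceeding $r$, so the fibre of $p_1$ over a general point of an $r$-dimensional component $V$ is a linear series of dimension zero, i.e.\ a single point; this gives at once the existence, uniqueness and birationality of the component $\mathcal V$ dominating $V$, and finiteness of $p_2$ together with $\dim(\mathcal V)=r$ gives that $\mathcal V$ dominates $L$. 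Part (i) is elementary (a degree-$k$ divisor imposes at most $k\leqslant r$ conditions on $L$), and (iii) is a formal consequence of (ii) via the isomorphism $\mathfrak s$. To have a complete proof of the lemma you must supply these arguments (or at least the citations); as written, your proof of (iv) rests on assertions you have not established.
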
 
 \begin{proof}
Part (i) is clear. The dimensionality assertion in (ii) follows from \cite[\S~1]{c83} or  \cite [Lemma (3.2), p. 342]  {acgh}. The rest of (ii) follows  from these facts: the fibres of $p_1$ are isomorphic to linear subseries of $L$ and $p_2$ is finite, so no component of $\mathcal C(k,L)$ has dimension larger than $r$. Part (iii) follows from (ii). Part (iv) follows from the Uniform Position Theorem (see \cite [p. 112] {acgh}). \end{proof}

 Next we look at the case  $L$ non--birational and $k\geqslant r$. 
Consider the induced  finite morphism $f_*\colon C(k,L)\to C'(k)$. 
For each partition $\underline m=(m_1, \dots m_s)$ of $k$ with  $\delta \geqslant s\geqslant r$ and $1\leqslant  m_1\leqslant m_2\leqslant \dots \leqslant m_{s}\leqslant \nu$, we denote by $C( \underline m, L)$ the closure in $C(k, L)$ of the inverse image via $f_*$  of the set of  divisors of the form $m_1y_1+\dots +m_sy_s$, with $y_1+\dots+ y_s$ a reduced divisor 
in $C'(s,L')$.  To denote a partition $\underline m$ as above we may use the \emph{exponential notation} $\underline m=(1^{\mu_1},\ldots, \nu^ {\mu_\nu})$,
meaning that $i$ is repeated $\mu_i$ times, with $1\leqslant i\leqslant \nu$, and we  may omit $i^ {\mu_i}$ if $\mu_i=0$.  Note that $\sum_{i=1}^\nu  \mu_i=s\leqslant \delta$. 
Set $\mu_0:=\delta-s$.  Then $\underline m^ c=(1^{\mu_{\nu-1}},\ldots, (\nu-1)^ {\mu_1}, \nu ^ {\mu_0})$ is a partition of $d-k$ which we call the \emph{complementary partition} of $\underline m$. 

\begin{lem}\label{lem:reducible} In the above set up each irreducible component of  $C(\underline m, L)$ has dimension $r$, hence it is an irreducible component of  $C(k,L)$ and all irreducible components of $C(k,L)$ are of this type.  \end{lem}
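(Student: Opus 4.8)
The plan is to exhibit $C(\underline m,L)$ as finite over the irreducible base $C'(s,L')$, read off dimensions from this, and then play the outcome against the pure $r$-dimensionality of $C(k,L)$ already recorded in Lemma~\ref{lem:irreducible}(ii). I begin with the base. Since $\phi_{L'}\colon C'\to\bar C$ is the normalization of $\bar C$, the series $L'$ is birational on $C'$; as $s\geqslant r$, Lemma~\ref{lem:irreducible}(ii),(iv) applied to $(C',L')$ shows that $C'(s,L')$ is irreducible of dimension $r$, and that through a general $y_1+\dots+y_s\in C'(s,L')$ there passes a \emph{unique} $H'\in L'$. A general member of $L'$ is reduced and avoids the branch locus of $f$, so the reduced divisors $y_1+\dots+y_s$ whose points are all unramified for $f$ and whose unique containing $H'$ is reduced form a dense open subset $U\subseteq C'(s,L')$.

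Next comes the map and the upper bound. The morphism $f_*\colon C(k)\to C'(k)$ is finite, since a divisor $D$ with prescribed image is supported on the finite set $f\inv(\mathrm{supp}(f_*D))$ with prescribed degree over each point, leaving only finitely many choices; hence its restriction to $C(k,L)$ is finite as well. Composing with the de-weighting map $\sum_\ell m_\ell y_\ell\mapsto\sum_\ell y_\ell$ gives a finite morphism $g\colon C(\underline m,L)\to C'(s,L')$, and $g$ is surjective because any reduced $\sum_\ell y_\ell\in C'(s,L')$ lies on some $H'\in L'$, over each $y_\ell$ of which one may pick a degree-$m_\ell$ subdivisor of $f^*H'$. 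Finiteness then forces $\dim V\leqslant\dim C'(s,L')=r$ for every irreducible component $V$ of $C(\underline m,L)$.

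The reverse inequality is the technical heart, and amounts to the purity of $C(\underline m,L)$. Over $U$ the fibre of $g$ is exactly the set of choices, for each $\ell$, of an $m_\ell$-element subset of the $\nu$ distinct points of $f\inv(y_\ell)$: no non-reduced divisor occurs, because the unique containing $H'$ is reduced and $y_\ell$ is unramified, so $f^*H'$ is reduced over $y_\ell$. These choices form a finite \'etale cover of $U$ of constant positive degree $\prod_\ell\binom{\nu}{m_\ell}$, so $g\inv(U)$ is nonempty and pure of dimension $r$. It remains to see that $g\inv(U)$ is \emph{dense} in $C(\underline m,L)$, i.e. that no spurious lower-dimensional component sits over $C'(s,L')\setminus U$; I expect this to be the main obstacle. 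Here I would use that $f$ is finite flat, hence the relative symmetric products $\mathrm{Sym}^{m_\ell}_{C/C'}$ are finite flat, in particular proper with no vertical components. Combined with the irreducibility of $C'(s,L')$ this shows that every divisor of type $\underline m$ is a flat limit, along an arc meeting $U$, of divisors lying over $U$, whence $C(\underline m,L)=\overline{g\inv(U)}$ is pure of dimension $r$. (An alternative route runs inside $\mathcal C(k,L)$ and projects by $p_2$ to the \emph{smooth} base $L\cong\bbP^r$, where over the dense locus of reduced $H$ the type-$\underline m$ locus is again finite of constant degree.)

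Finally the remaining assertions are formal. Since $C(\underline m,L)\subseteq C(k,L)$ and $C(k,L)$ is pure of dimension $r$ by Lemma~\ref{lem:irreducible}(ii), any $r$-dimensional irreducible subvariety of $C(k,L)$ must be one of its components; hence each component of $C(\underline m,L)$ is a component of $C(k,L)$. Conversely, stratify $C(k,L)$ by the partition type of $f_*D$: the general divisor $D$ of a fixed component $V$ has a well-defined type $\underline m$ of some length $s$, with $s\leqslant\delta$ because $f_*D\leqslant\nu H'$ and $\deg H'=\delta$. The finite map $V\to C'(s,L')$ gives $\dim V\leqslant\dim C'(s,L')$, which equals $s$ when $s<r$ and $r$ when $s\geqslant r$; since $\dim V=r$ this forces $s\geqslant r$, and then $V\subseteq C(\underline m,L)$. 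Thus the components of $C(k,L)$ are precisely the components of the various $C(\underline m,L)$, as claimed.
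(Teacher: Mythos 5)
The converse half of your argument (the last paragraph) and the upper bound $\dim\leqslant r$ are correct and essentially reproduce the paper's own proof: the general divisor of a component $V$ of $C(k,L)$ has a well-defined type $\underline m$, the bound $\dim C'(s,L')=\min\{s,r\}$ forces $s\geqslant r$, and then $V\subseteq C(\underline m,L)$ is maximal, hence a component. (A small point: the de-weighting map does not extend to a morphism on the closure $C(\underline m,L)$, so your $g$ is only a rational map; this is harmless for dimension bounds, which only need the finite morphism $f_*$ and $\dim f_*(C(\underline m,L))=r$.) The genuine gap is exactly at the step you flag as the main obstacle, and the flatness argument you propose does not close it. The locus whose closure defines $C(\underline m,L)$ is $\{D\in C(k,L)\ :\ f_*D=\sum_\ell m_\ell y_\ell,\ y_\ell\ \text{distinct}\}$, and this is strictly larger than the locus governed by the relative symmetric products $\mathrm{Sym}^{m_\ell}_{C/C'}$: membership in $C(k,L)$ only requires $D\leqslant f^*(H')$ for \emph{some} $H'\in L'$, and when $H'$ has multiplicity $\geqslant 2$ at $y_\ell$ while $f$ is unramified there, the part of $D$ over $y_\ell$ may be a non-reduced divisor such as $2Q$, $Q\in f^{-1}(y_\ell)$, which satisfies $2Q\not\leqslant f^*(y_\ell)$ and so is not a point of $\mathrm{Sym}^{m_\ell}_{C/C'}$ at all. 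Such divisors are invisible to your flat family, and they genuinely are not limits of divisors over $U$: in $\overline{g^{-1}(U)}$, any non-reduced part of a divisor whose image still has type $\underline m$ must lie over a branch point of $f$, because a limit of subdivisors of fibres is a subdivisor of the limit fibre.

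In fact this is not a repairable technicality: purity fails for the paper's literal definition. Take $f\colon C\to C'$ a double cover of a smooth plane curve $C'$ of degree at least $3$, and $L=f^*(L')$ with $L'$ the net of lines, so $\nu=2$, $r=2$; take $\underline m=(1,2)$, $k=3$. Then $\overline{g^{-1}(U)}$ is the image of $C\times C'$ under $(P,y)\mapsto P+f^*(y)$, of dimension $2$, but $C((1,2),L)$ also contains every divisor $P+2Q$ with $Q$ not a ramification point of $f$ and with the tangent line to $C'$ at $f(Q)$ passing through $f(P)$: these form a nonempty $1$-dimensional locus disjoint from $\overline{g^{-1}(U)}$ (they are limits of divisors of type $(1^3)$, i.e.\ they lie on $C((1^3),L)$, not limits of divisors of type $(1,2)$). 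So your identity $C(\underline m,L)=\overline{g^{-1}(U)}$, and with it the claim that every component of $C(\underline m,L)$ has dimension $r$, is false as stated. For what it is worth, the paper's own justification of this point is no stronger --- ``the image of $C(\underline m,L)$ in $C'(k)$ has dimension $r$, so each component of $C(\underline m,L)$ has dimension $r$'' only yields the upper bound --- and what the paper actually uses later (in Lemmas 3.7 and 3.8) is only the part that both you and the paper do establish: every component of $C(k,L)$ is an $r$-dimensional component of some $C(\underline m,L)$. Your flat-limit argument is correct, and does prove purity, for the smaller locus $\overline{g^{-1}(U)}$, i.e.\ for the variant of $C(\underline m,L)$ in which one requires the part of $D$ over each $y_\ell$ to be $\leqslant f^*(y_\ell)$; that is presumably the object the authors intended.
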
 
 \begin{proof} Let $V$ be an irreducible component of $C(k,L)$: by  Lemma \ref{lem:irreducible}  there is a unique   irreducible component $\mathcal V$ of $\mathcal C(k,L)$ dominating it and $p_1|_{\mathcal V}\colon \mathcal V\to V$ is birational.
  Let $D\in V$ be a general point and let $(D,H)$ be the unique point of $\mathcal V$ mapping to $D$ via $p_1$, so that $H$ is a general divisor in $L$  (cf. Lemma \ref {lem:irreducible}, (ii)). Hence
 $H$ consists of $\delta$ distinct fibres $F_1,\ldots, F_\delta$ of $f$, each being a reduced divisor of degree $\nu$ on  $C$. Then $D$ consists of $m_i$ points in $F_i$, for $1\leqslant i\leqslant s\leqslant \delta$, where we may assume $1\leqslant  m_1\leqslant m_2\leqslant \dots \leqslant m_{s}\leqslant \nu$. Moreover, since $\dim(V)=\dim(\mathcal V)=r$ and $p_2$ is finite, one has $s\geqslant r$.  Hence $D\in C(\underline m,L)$, with $\underline m=(m_1, \dots m_s)$, i.e.  $V\subseteq C(\underline m,L)\subseteq C(k,L)$ hence $V$ is a component of $C(\underline m,L)$.
  
The above considerations and Lemma  \ref {lem:irreducible}, (ii), applied to $C'(s,L')$, imply that the image of $C(\underline m,L)$ in $C'(k)$ has dimension $r$, so  each component of $C(\underline m,L)$ has  dimension $r$. \end{proof}

\subsection{Abel--Jacobi images}

 We assume from now on  that $C$ has  genus $g>0$.
For an integer $k$, we denote  by $J^k(C)\subset \Pic(C)$ the set of linear equivalence classes of divisors of degree $k$ on $C$.  So $J(C):=J^ 0(C)$ is the Jacobian of $C$, which is a principally polarised abelian variety whose theta divisor class we denote by $\Theta_C$, or simply by $\Theta$. 

The abelian variety $J(C)$  acts via translation on $J^ k(C)$ for all $k$. 
If $X\subseteq J^ k(C)$ and $Y\subseteq J(C)$, we say that $X$ is \emph{$Y$--stable}, if for all $x\in X$ and for all $y\in Y$, one has $x+y\in X$. 

For all integers $k$, fixing the class of a divisor of degree $k$  determines  an  isomorphism $J^ k(C)\cong J(C)$. Given a subvariety $V$ of  $J^ k(C)$, one says that it \emph{generates} $J^ k(C)$ if the image of $V$ via  one of the above isomorphisms generates  $J(C)$ as an abelian variety. This  definition     does not depend on the choice of the isomorphism  $J^ k(C)\cong J(C)$. 

For every $k\geqslant 1$, we denote by $j_k \colon C(k)  \to J^ k(C)\cong J(C)$ (or simply by $j$) the Abel--Jacobi map. We denote  by $W^ s_k(C)$ the subscheme of $J^ k(C)$ corresponding to classes of divisors $D$ such that $h^ 0(\mathcal O_C(D))\geqslant s+1$ (these are the so--called {\em Brill--Noether loci}). One sets $W_k(C):=W^ 0_k(C)={\rm Im}(j_k)$ and $W_{g-1}(C)$ maps to  a theta divisor of $J(C)$, so we may abuse notation and write $W_{g-1}(C)=\Theta_C$. 

We denote by $\Gamma_C(k, L)$ [resp. $\Gamma_C(\underline m, L)$]
the image in $W_k(C)$ of $C(k,L)$ [resp. $C(\underline m,L)$] (we may drop the subscript $C$ if there is no danger of confusion). The expected dimension of $\Gamma(k,L)$ is $\min\{r,g\}$ 
(by dimension of a scheme we mean the maximum of the dimensions of its components). 

We set $\rho_C(k,L):= \dim(\Gamma_C(k,L))$ (simply denoted by $\rho(k,L)$ or by $\rho$ if no confusion arises). By Lemma \ref {lem:irreducible}, (i), one has:
\begin{equation}\label{eq:aiuto}
{\rm if} \,\,\, k\leqslant r \,\, \, {\rm then}\,\,\, \Gamma(k,L)=W_k(C),\,\,\, {\rm hence}\,\,\, \rho=\min\{k,g\}
\end{equation}
So we will  consider next the case $k>r$, in which  $\rho\leqslant \dim(C(k,L))=r$, by Lemma  \ref {lem:irreducible}, (ii). 
Then the class $c (k,L)$ of $C(k, L)$ in the Chow ring of $C(k)$ is computed in \cite[ Lemma VIII.3.2] {acgh}. If  $x$ is the class of $C(k-1)\subset C(k)$ and $\theta:=j^ *(\Theta)$, one has
\begin{equation}\label{eq:class}
c(k,L)=\sum_{s=0}^{k-r} {{d-g-r}\choose{s}}\frac {x^s\theta^{k-r-s}}{(k-r-s)!} \ .
\end{equation}

\begin{lem}\label{class} Assume $k>r$  and $d-g-r \geqslant 0$. Then:\\
\begin{inparaenum}[(i)]
\item if $k-g\leqslant \min\{k-r, d-g-r\}$ one has $\rho=r\leqslant g$;\\
\item if $k-g\geqslant \min\{k-r, d-g-r\}=k-r$, one has $\rho=g\leqslant r$;\\
\item if $k-g\geqslant \min\{k-r, d-g-r\}=d-g-r$, one has $\rho=d-k\leqslant \min\{r,g\}$.
\end{inparaenum}
\end{lem}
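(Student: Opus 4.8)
The plan is to extract $\rho=\dim\Gamma(k,L)=\dim j\bigl(C(k,L)\bigr)$ directly from the class $c(k,L)$ of \eqref{eq:class} by pairing it with monomials in $x$ and $\theta$. The principle I would use is that for any effective cycle $V\subseteq C(k)$ of pure dimension $r$,
\[
\dim j(V)=\max\{\,m\geqslant 0:\ x^{\,r-m}\,\theta^{\,m}\cdot [V]\neq 0\,\}.
\]
To see this, recall that the fibres of $j\colon C(k)\to J^k(C)$ are the complete linear series $|D|\cong\bbP^{s}$, that $\theta=j^*(\Theta)$ restricts to $0$ on each fibre (as $j$ is constant there), and that $x=[C(k-1)]$ restricts to the hyperplane class (a divisor of $|D|$ lies on $C(k-1)$ exactly when it contains the chosen point). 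Representing $\theta$ by $m$ general translates $j^{-1}(\Theta_{a_i})$ of $j^*\Theta$: if $m>\rho:=\dim j(V)$ the image $j(V)\cap\Theta_{a_1}\cap\dots\cap\Theta_{a_m}$ is empty, so the number vanishes; if $m=\rho$ the cycle $V\cap j^{-1}(\Theta_{a_1})\cap\dots$ is a union of full fibres lying over the finitely many points of $j(V)\cap\Theta_{a_1}\cap\dots\cap\Theta_{a_\rho}$, on each of which $x^{\,r-\rho}$ has positive degree. All contributions being nonnegative, there is no cancellation, and the criterion passes from a single $V$ to the effective class $c(k,L)$, whose components all have dimension $r$ by Lemma \ref{lem:reducible}.

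Next I would substitute \eqref{eq:class} and expand, obtaining
\[
x^{\,r-m}\,\theta^{\,m}\cdot c(k,L)=\sum_{s=0}^{k-r}\binom{d-g-r}{s}\frac{1}{(k-r-s)!}\;x^{\,r-m+s}\,\theta^{\,m+k-r-s}.
\]
Applying the standard intersection numbers $x^{a}\theta^{b}=\dfrac{g!}{(g-b)!}$ for $a+b=k$, $0\leqslant b\leqslant g$ (and $0$ for $b>g$), every summand is a product of nonnegative quantities, and a summand is strictly positive precisely when $0\leqslant s\leqslant\min\{k-r,\,d-g-r\}$ and the $\theta$--exponent obeys $m+k-r-s\leqslant g$. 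By the positivity noted above the pairing is nonzero if and only if such an index $s$ exists, i.e. if and only if
\[
m+k-r-g\ \leqslant\ \min\{k-r,\,d-g-r\}.
\]

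Setting $M:=\min\{k-r,\,d-g-r\}$ and remembering $0\leqslant m\leqslant r$, this gives the single formula $\rho=\min\{\,r,\ r+g-k+M\,\}$, from which the three cases fall out by resolving $M$. If $k-g\leqslant M$ then $r+g-k+M\geqslant r$, so $\rho=r$, and $k-g\leqslant M\leqslant k-r$ forces $r\leqslant g$: case (i). If $M=k-r\leqslant k-g$ then $r\geqslant g$ and $r+g-k+M=g$, so $\rho=g\leqslant r$: case (ii). If $M=d-g-r\leqslant k-g$ then $d-k\leqslant r$, $d-k\leqslant g$, and $r+g-k+M=d-k$, so $\rho=d-k\leqslant\min\{r,g\}$: case (iii). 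The one genuinely delicate step is the dimension--detection principle of the first paragraph — establishing that $x$ and $\theta$ restrict as claimed to the fibres of $j$ and that the reducibility of $C(k,L)$ produces no cancellation; granting that, the remainder is the bookkeeping of a manifestly positive sum.
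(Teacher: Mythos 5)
Your proposal is correct, and the arithmetic at the end (the admissible range for $s$, the closed formula $\rho=\min\{r,\,r+g-k+M\}$ with $M=\min\{k-r,\,d-g-r\}$, and the three resulting cases) coincides exactly with the paper's; what differs is the mechanism used to extract $\rho$ from \eqref{eq:class}. The paper pushes the class forward to $J^k(C)$ by the projection formula, obtaining \eqref{eq:class2}, converts each $w_{k-s}\Theta^{k-r-s}$ into a power of $\Theta$ by Poincar\'e's formula, and reads $\rho$ off as the dimension of the top-dimensional surviving term. You never leave $C(k)$: you first establish a detection principle, $\rho=\max\{m:\ x^{r-m}\theta^m\cdot c(k,L)\neq 0\}$, and then evaluate the resulting numbers by the standard formula $x^a\theta^b=g!/(g-b)!$. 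The cost of your route is the extra paragraph of justification; what it buys is rigor at precisely the point the paper leaves implicit. Indeed, when every component of $C(k,L)$ is contracted by $j$ --- which is what happens in cases (ii) and (iii) whenever $\rho<r$ --- the honest cycle pushforward $j_*c(k,L)$ is zero, because $j_*[C(k-s)]=0$ as soon as $k-s>g$; thus the terms of \eqref{eq:class2} with $s<k-g$, which are the ones from which the paper reads off $\rho$ in those cases, are not what the projection formula literally produces, and giving them meaning requires exactly the positivity/no-cancellation principle you prove. Two minor repairs: the pure $r$-dimensionality of $C(k,L)$ should be cited from Lemma \ref{lem:irreducible}, (ii) (Lemma \ref{lem:reducible} is stated under the assumption that $L$ is non-birational); and in the detection principle you should add the one-line Kleiman-transversality remark that general translates $\Theta_{a_i}$ meet $j(V)$ only at points over which the fibre of $j|_V$ has the generic dimension $r-\rho$, so that the intersection with $V$ is proper, its components are exactly those fibres, and all multiplicities are positive --- you assert this, but it is the step that needs saying.
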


\begin{proof} Note that $x^ s$ is the class of $C(k-s)\subset C(k)$, for $1\leqslant s\leqslant k$. Applying  the projection formula (cf. \cite{fulton}, Example 8.1.7)  to \eqref {eq:class}, we find the class $\gamma(k,L)$ of $\Gamma(k,L)$ 
\begin{equation}\label{eq:class2}
\gamma(k,L)=\sum_{s=0}^{k-r} {{d-g-r}\choose{s}}\frac {w_{k-s}\Theta^{k-r-s}}{(k-r-s)!} \ 
\end{equation}
where $w_i$ is the class of $W_i(C)$ for any $i\geqslant 0$. By Poincar\'e's formula (cf. \cite[\S 11.2]{birk}) one has
\[
w_{k-s}\Theta^{k-r-s}=
\begin{cases}
\Theta^{k-r-s}, & \text{if $k-g\geqslant s\geqslant 0$,} \\
\frac {\Theta^{g-r}}{(g-k+s)!}, & \text{if $\max\{0,k-g\}\leqslant s\leqslant  \min\{k-r, d-g-r\}$},
\end{cases}
\]
whence the assertion follows. \end{proof}

\begin{lem}\label{lem:class2}
\begin{inparaenum}[(i)]
\item If $\rho=g$ then $g\leqslant \min\{k,r\}$;\\
\item  if $r\geqslant k\geqslant g$, then $\rho=g$;\\
\item  if $k> r\geqslant g$ and $d\geqslant k+g$ then $\rho=g$;\\
\item  $\rho=0$ if and only if $k=d$.
\end{inparaenum}
\end{lem}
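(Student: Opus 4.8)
\emph{Parts (i)–(iii).} I would read these directly off the inclusion $\Gamma(k,L)\subseteq W_k(C)$, the standard equality $\dim W_k(C)=\min\{k,g\}$, and Lemma \ref{class}. For (i): the inclusion gives $\rho\le\min\{k,g\}$, while $\rho\le r$ in all cases (if $k\le r$ then $\rho=\min\{k,g\}\le k\le r$ by \eqref{eq:aiuto}, and if $k>r$ then $\rho\le\dim C(k,L)=r$ by Lemma \ref{lem:irreducible}). Hence $\rho=g$ forces $\min\{k,g\}=g$, i.e. $k\ge g$, together with $g\le r$, so $g\le\min\{k,r\}$. For (ii): when $r\ge k\ge g$, \eqref{eq:aiuto} gives $\rho=\min\{k,g\}=g$. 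For (iii): the hypotheses $k>r$, $d\ge k+g$ and $r\ge g$ give $d-g-r\ge k-r>0$, so Lemma \ref{class} applies; since $k-r\le d-g-r$ (equivalent to $d\ge k+g$) and $k-g\ge k-r$ (equivalent to $r\ge g$), we are in case (ii) of that lemma, whence $\rho=g$.

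\emph{Part (iv), the easy direction and reductions.} If $k=d$, a degree-$d$ subdivisor of a degree-$d$ member of $L$ is that member, so $C(d,L)$ is the image of $L$ in $C(d)$; its points are mutually linearly equivalent, $\Gamma(d,L)$ is a single point, and $\rho=0$. For the converse I must prove $\rho\ge1$ whenever $0<k<d$; note $r\ge1$ since $L$ is base point free of positive degree. The involution $\mathfrak s$ together with $j_{d-k}(H-D)=\lambda-j_k(D)$ (where $\lambda$ is the common class of the members of $L$) identifies $\Gamma(d-k,L)$ with $\lambda-\Gamma(k,L)$, so $\rho(k,L)=\rho(d-k,L)$. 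Invoking \eqref{eq:aiuto} when $k\le r$ and this symmetry when $d-k\le r$, I may assume $r<k<d-r$. If in addition $d-g-r\ge0$, then Lemma \ref{class} applies, and since its case (iii) requires $k\ge d-r$ and is thus excluded, we fall into case (i) or (ii) and obtain $\rho\in\{r,g\}\ge1$.

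\emph{Part (iv), the remaining case.} There remains $r<k<d-r$ with $d-g-r<0$, i.e. $L$ special; this is the substantial part, and the model argument is for $L$ \emph{birational}. Then $C(k,L)$ is irreducible (Lemma \ref{lem:irreducible}(iv)), so $\rho=0$ would force $C(k,L)\subseteq|D_0|$ for a single class $D_0$; but for a general $H=P_1+\dots+P_d\in L$ the subdivisors $P_1+\dots+P_k$ and $P_2+\dots+P_{k+1}$ both lie in $C(k,L)$, hence $P_1\sim P_{k+1}$, impossible for distinct points on a curve of genus $g>0$ (here $0<k<d$ is exactly what makes $P_{k+1}$ available and distinct from $P_1$). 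So $\rho\ge1$. For non-birational $L$ I would pass to $f\colon C\to C'$ and a component $V=C(\underline m,L)$, with general $D=D_1+\dots+D_s$, $D_i\le F_i$ (notation of Lemma \ref{lem:reducible}), and show $V$ is not contracted. If $g(C')>0$ I would push forward by the norm map $f_*$: a contraction of $V$ would make $[\sum_i m_iy_i]$ constant on the positive-dimensional $C'(s,L')$, contradicting the birational case just proved on $(C',L')$. If $g(C')=0$ I would argue infinitesimally, pairing the motion of $D$ with $\omega\in H^0(K_C)$ as $\sum_i v_i\sum_{P\in D_i}(\omega/df)(P)$; since $C'=\mathbb P^1$ carries no holomorphic differential, $\sum_{P\in F}(\omega/df)(P)=0$, so the evaluation $H^0(K_C)\to\C^{F}$ has image exactly the hyperplane $\{\sum_P x_P=0\}$, and for a proper nonempty $D_i\le F_i$ the functional $\sum_{P\in D_i}e_P^{*}$ is not proportional to $\sum_{P\in F_i}e_P^{*}$ — so some $\omega$ detects the motion and $[D]$ is nonconstant on $V$.

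\emph{Expected main obstacle.} The difficulty is concentrated entirely in this special range, and specifically in making the noncontraction argument uniform. When $s>r$ the points $y_1,\dots,y_s$ are constrained to lie on moving hyperplane sections, so the admissible tangent directions $v$ to $C'(s,L')$ are limited and must be analysed; and when the partition $\underline m$ has unequal parts the norm pushforward is a \emph{weighted} Abel--Jacobi map rather than a multiple of $j^{C'}_s$, so its nonconstancy is not immediate. I expect the cleanest resolution is to choose the component $V$ judiciously — for instance one whose partition contains a part strictly between $0$ and $\nu$, or a part equal to $1$ — so that the verification reduces to the nonvanishing of a single residue-type pairing controlled by the trace identity above, with the boundary value $s=\delta$ (where $C'(s,L')$ collapses to $L'$ itself) treated separately.
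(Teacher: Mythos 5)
Parts (i)--(iii), the easy direction of (iv), your residuation identity $\Gamma(d-k,L)=\lambda-\Gamma(k,L)$, and the non-special case $d-g-r\geqslant 0$ of the converse are all correct, and there you stay close to the paper (which gets (i)--(ii) from Lemma \ref{lem:irreducible} and (iii) from the class \eqref{eq:class2}). The genuine gap is the case you isolate at the end: $r<k<d-r$, $d-g-r<0$, $L$ non-birational. There you offer a program, not a proof, and the obstructions you flag yourself are real: for unequal parts $m_i$ the non-constancy of the weighted image $[\sum_i m_iy_i]$ does not reduce to the birational case on $C'$; in the boundary case $s=\delta$ your norm-map argument fails outright, since all divisors $mH'$ with $H'\in L'$ \emph{are} mutually linearly equivalent, so any motion must be detected inside the kernel of $f_*$, which the norm cannot see; and in the $g(C')=0$ case your assertion that the evaluation $H^0(K_C)\to\mathbb{C}^F$ has image \emph{exactly} the trace-zero hyperplane amounts to $h^0(\mathcal{O}_C(F))=2$, which you do not verify (it can fail, e.g.\ when $F$ moves in a larger series). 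As written, part (iv) is therefore not established.

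The miss is strategic rather than computational: you treated Lemma \ref{class}, with its hypothesis $d-g-r\geqslant 0$, as the only access to the class computation, and concluded that the special range requires separate geometry. The paper instead argues directly on \eqref{eq:class2}, whose derivation from \eqref{eq:class} (ACGH VIII.3.2, the projection formula and Poincar\'e's formula) uses only $k>r$ and no sign condition on $d-g-r$: if $\rho=0$, the class of $\Gamma(k,L)$ is a nonzero multiple of the point class $\Theta^g$, and inspecting the expansion of \eqref{eq:class2} shows that $\Theta^g$ can only come from the summand $s=k-r-g$ (the collected $\Theta^{g-r}$ terms are excluded since $r\geqslant 1$); hence $k\geqslant r+g$, so $d-g-r\geqslant 0$ is forced, and only \emph{then} is Lemma \ref{class} invoked, whose case (iii) gives $\rho=d-k$, i.e.\ $k=d$. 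In other words, the subcase you could not close is vacuous under the hypothesis $\rho=0$, and it is precisely the class formula that shows this. Your elementary monodromy argument for birational $L$ (two overlapping subdivisors giving $P_1\sim P_{k+1}$, impossible for $g>0$) is a pleasant alternative in that subcase, but it does not extend to where you actually need it.
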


\begin{proof} Parts (i) and (ii) follow from Lemma \ref {lem:irreducible}. 

(iii) In \eqref {eq:class2} one has the summand corresponding to the index $s=k-r>0$, which is $\Theta^0$ with the positive coefficient ${{d-g-r}\choose{k-r}}$, and no other summand in  \eqref {eq:class2} cancels it. 

(iv) If $k=d$ then $C(k,L)=L$ and clearly $\rho=0$. Conversely, if $\rho=0$ then in \eqref {eq:class2} the term $\Theta^ g$ has to appear with non--zero coefficient and no other term $\Theta^ i$  with $0\leqslant i<g$  appears with non--zero coefficient.  By looking at the proof of Lemma \ref {class}, we see that  the summand $\Theta^ g$ appears in \eqref  {eq:class2} only if $0\leqslant s=k-r-g$. Then $d\geqslant k\geqslant r+g$. So we may apply Lemma \ref {class}, and conclude that $\rho=0$ occurs only in case (iii), if $k=d$. \end{proof}

\begin{lem}\label{lem:genera} 
Let $A\subseteq J(C)$ be an  abelian subvariety of dimension $a$ and let $p\colon J^k(C)\to J':=J(C)/A$ be the map  obtained by composing an isomorphism $J^k(C)\cong J(C)$ with the quotient map $J(C)\to J'$. Then 
$$\dim (p(\Gamma(k,L)))=\min\{g-a, \rho\}. $$

\end{lem}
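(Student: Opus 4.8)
The inequality $\dim(p(\Gamma(k,L)))\le\min\{g-a,\rho\}$ is immediate, since $p$ is a morphism (so it cannot raise dimension, giving $\le\rho$) and its image lies in $J'$, which has dimension $g-a$. The whole content is therefore the reverse inequality, and the plan is to analyse the general fibre of $p$ restricted to $\Gamma:=\Gamma(k,L)$. Fixing a component $V\subseteq\Gamma$ with $\dim V=\rho$, one has $\dim(p(V))=\rho-e$, where $e$ is the dimension of the general fibre of $p|_V$; since the fibres of $p$ are exactly the cosets of $A$, everything comes down to showing that $e=\max\{0,\rho-(g-a)\}$, equivalently that a coset of $A$ meeting $V$ does so, generically, in dimension $\max\{0,\rho-(g-a)\}$.

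For $\rho\ge g-a$ I would invoke Kleiman's transversality theorem for the (transitive) translation action of $J(C)$ on itself: for a general $t\in J(C)$ the translate $t+A$ meets $V$ in a nonempty set of pure dimension $\rho+a-g\ge 0$. Since a general coset of $A$ already meets $V$, the map $p|_V$ is dominant onto $J'$, so $\dim(p(V))=g-a=\min\{g-a,\rho\}$ (and indeed $e=\rho-(g-a)$), as wanted.

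The case $\rho<g-a$ is the heart of the matter, and the step I expect to be the main obstacle: here I must prove that $p|_V$ is \emph{generically finite}, i.e. that no positive-dimensional subvariety of a coset of $A$ passes through a general point of $V$. Kleiman's theorem is silent now, because when $\rho+a<g$ the \emph{general} coset of $A$ misses $V$ entirely, so the fibres that matter sit over the \emph{special} cosets meeting $V$; since $A$ is fixed and cannot be moved, the geometry of $V$ must be used directly. I would exploit the description of the components coming from Lemmas \ref{lem:irreducible} and \ref{lem:reducible}: $V=\Gamma_C(\underline m,L)$ for a partition $\underline m=(m_1,\dots,m_s)$ with $s\ge r$, so a general point of $V$ is the class of a reduced divisor $D$ whose points vary along $C$ in $s$ moving fibres of $f$, and $p$ sends it to a weighted sum $\sum_i m_i\sum_j c(x_{ij})$ in $J'$, where $c\colon C\to J'$ is the Abel--Jacobi map of $C$ composed with the quotient $J(C)\to J'$. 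Because $C$ generates $J(C)$, the curve $c(C)$ generates $J'$, and the differential of such a weighted sum is spanned by the translated tangent directions to $c(C)$; the key point is then that the points of $D$ move along $C$, and \emph{not} inside a coset of $A$, so $V$ cannot contain a positive-dimensional piece of a coset of $A$ through a general point. Combined with $\dim V=\rho$, this forces the general fibre of $p|_V$ to be finite. In the dual, differential, formulation one is claiming that the fixed subspace $W:=p^*H^0(\Omega^1_{J'})\subseteq H^0(\omega_C)$ of dimension $g-a$ meets the moving subspace $K_D:=\{\eta\in H^0(\omega_C):\eta|_{T_DC(k,L)}=0\}$ of dimension $g-\rho$ in the expected dimension $\max\{0,(g-a)-\rho\}$ for general $D$; only the inequality ``$\ge$'' is automatic, and establishing the transversality ``$\le$'' against the fixed space $W$ is precisely what the generating property of $c(C)$ in $J'$ must supply.
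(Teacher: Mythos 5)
Your reduction of the lemma to a statement about a single top--dimensional component $V$ of $\Gamma(k,L)$ is where the argument breaks down, and it cannot be repaired: the component--wise statement is false. Irreducible components of $\Gamma(k,L)$ can themselves be $A$--stable, and this is precisely the phenomenon the whole paper studies. Concretely, let $\varphi\colon C\to C'$ be a degree $2$ cover of a curve of genus $a>0$ with $g>2a+1$, let $A=\varphi^*(J(C'))$, let $T$ be the tangent space to $A$ at the origin, $L=\pp(T^{\perp})\subseteq |K_C|$, and set $Z=W_{g-1-2a}(C)+\varphi^*(J^{a}(C'))$. Then $Z$ is irreducible, $A$--stable, of dimension $r=g-1-a=\rho(g-1,L)$, and (exactly as in Lemma \ref{lem:nobir}, (i)) it is an irreducible component of $\Gamma(g-1,L)$. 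Since $Z$ is a union of cosets of $A$, every fibre of $p|_Z$ has dimension at least $a$, so $p$ contracts $Z$ to dimension at most $r-a<\min\{\rho,g-a\}$. Thus for this component $p|_V$ is not generically finite even though $\rho<g-a$, your claim that ``$V$ cannot contain a positive--dimensional piece of a coset of $A$ through a general point'' fails (here $V$ contains a full coset through every point), and the dual transversality statement for $p^*H^0(\Omega^1_{J'})$ against $K_D$ fails as well. No argument at a general point of one fixed component can succeed: the lemma is genuinely a statement about the union of all components, and its role in the paper (via Corollary \ref{cor:gen}, which feeds into Lemma \ref{lem:nobir}, (ii)) is exactly to guarantee that when one component is contracted by $p$, some other component is not. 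This forces a global argument, and the paper's proof is intersection--theoretic: by \eqref{eq:class2} and Poincar\'e's formula the $\rho$--dimensional part of the class of $\Gamma(k,L)$ equals $\alpha\Theta^{g-\rho}$ with $\alpha>0$; if every $\rho$--dimensional component had image of dimension less than $\min\{\rho,g-a\}$, then intersecting with $\xi^{\rho}$, where $\xi$ is the pullback of an ample class of $J'$, would give $0$, contradicting $\alpha\Theta^{g-\rho}\xi^{\rho}>0$ since $\Theta$ is ample.

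There is a second, independent gap in your case $\rho\geqslant g-a$: Kleiman's transversality theorem asserts that a general translate intersection $(t+A)\cap V$ is either \emph{empty} or of pure dimension $\rho+a-g$; it does not assert nonemptiness, and on abelian varieties nonemptiness genuinely fails (take $J(C)$ isogenous to $E\times E$ and $V$, $A$ both equal to the factor $E\times\{0\}$: the general translate $t+A$ is disjoint from $V$). Moreover, for general $t$ one has $(t+A)\cap V\neq\emptyset$ if and only if $V+A=J(C)$, i.e.\ if and only if $p|_V$ dominates $J'$ --- which is exactly what you are trying to prove, so this step is circular. Nonemptiness of such intersections must come from positivity of an intersection number, and that positivity is precisely what the paper's computation of the class of $\Gamma(k,L)$ supplies.
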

\begin{proof} 
If $\rho=g$ the statement is obvious, hence we assume $\rho<g$. 

Consider first the case $k>r$.  Assume  by 
 contradiction that $\dim (p(\Gamma(k,L))<\min\{\rho, g-a\}$.
Let $\xi$ be the class of the pull back to $J(C)$ of an ample line bundle of $J'$. We have $\bar \gamma(k,L)\xi^{\rho}=0$, where $\bar\gamma(k,L)$ is the $\rho$--dimensional part of $\gamma(k,L)$. 
By \eqref {eq:class2} one has 
$\bar \gamma(k,L)= \alpha \Theta^ {g-\rho}$, 
where $\alpha\in \mathbb Q$ is positive because $\Gamma(k, L)$ is an effective non--zero cycle of dimension $\rho$. 
Hence $\bar \gamma(k,L)\xi^{\rho}=\alpha  \Theta^ {g-\rho}\xi^{\rho}>0$, because $\Theta$ is ample. Thus we have a contradiction.

If $k\leqslant r$, then $\Gamma(k,L)=W_k(C)$, $\rho=k$ and  $\gamma(k,L)$  is again a rational multiple of $\Theta^ {g-k}$ (by Poincar\'e's formula), so the proof proceeds as above. \end{proof}

\begin{cor} \label {cor:gen} If $A\subseteq J(C)$ is an  abelian subvariety of dimension $a>0$ and $\Gamma(k, L)$ is $A$--stable, then the restriction of $p$ to $\Gamma(k,L)$ is surjective onto $J'=J(C)/A$, hence $\Gamma(k, L)=J^ k(C)=W_k(C)$, i.e., $\rho=g$.
\end{cor}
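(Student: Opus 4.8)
The plan is to read this off Lemma \ref{lem:genera} by feeding in the hypothesis of $A$--stability. First I would unwind what $A$--stability means here: it says that $\Gamma(k,L)$ is invariant under translation by every element of $A$, i.e.\ $\Gamma(k,L)+A=\Gamma(k,L)$. Since the fibres of $p$ are precisely the cosets of $A$, and $\Gamma(k,L)$ is closed, this invariance forces $\Gamma(k,L)=p\inv(p(\Gamma(k,L)))$, i.e.\ $\Gamma(k,L)$ is saturated for $p$. The morphism $p|_{\Gamma(k,L)}$ is then proper with every fibre a coset of $A$, hence of dimension exactly $a$, so the fibre--dimension theorem yields the clean additivity
\[
\rho=\dim\Gamma(k,L)=\dim\big(p(\Gamma(k,L))\big)+a.
\]

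Next I would substitute the value computed in Lemma \ref{lem:genera}, namely $\dim(p(\Gamma(k,L)))=\min\{g-a,\rho\}$, to obtain $\rho=\min\{g-a,\rho\}+a$. Here the hypothesis $a>0$ does all the work: if the minimum equalled $\rho$, the relation would read $\rho=\rho+a$, forcing $a=0$, a contradiction. Hence the minimum must be $g-a$, giving $\rho=(g-a)+a=g$.

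Finally, from $\dim\Gamma(k,L)=\rho=g$ together with the chain of inclusions $\Gamma(k,L)\subseteq W_k(C)\subseteq J^k(C)$ and the fact that $J^k(C)$ is irreducible of dimension $g$, I conclude $\Gamma(k,L)=J^k(C)=W_k(C)$. Applying $p$ then gives $p(\Gamma(k,L))=p(J^k(C))=J'$, which is the asserted surjectivity; equivalently $\dim(p(\Gamma(k,L)))=g-a=\dim J'$ and $p(\Gamma(k,L))$ is closed in the irreducible $J'$.

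The argument is short precisely because the substantive content already lives in Lemma \ref{lem:genera}. The one point that needs genuine care is the opening step: one must verify that $A$--stability of the \emph{closed} set $\Gamma(k,L)$ really produces saturation, and that all fibres of $p|_{\Gamma(k,L)}$ share the same dimension $a$ (so that no jump in fibre dimension spoils the additivity). This is guaranteed because the fibres are translates of the single connected abelian subvariety $A$, so I expect no obstacle beyond recording this observation.
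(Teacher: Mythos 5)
Your proof is correct and is essentially the argument the paper leaves implicit: Corollary \ref{cor:gen} is stated with no separate proof precisely because it follows from Lemma \ref{lem:genera} in the way you describe, namely $A$--stability makes every nonempty fibre of $p|_{\Gamma(k,L)}$ a full coset of $A$, so $\rho=\dim\bigl(p(\Gamma(k,L))\bigr)+a=\min\{g-a,\rho\}+a$, and $a>0$ forces $\rho=g$, whence $\Gamma(k,L)=J^k(C)=W_k(C)$ and surjectivity onto $J'$. The only point worth recording (since $\Gamma(k,L)$ may be reducible) is that the fibre--dimension additivity should be applied component by component, each component being itself $A$--stable because $A$ is connected --- exactly the observation you flag at the end.
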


\begin{cor} \label {cor:gen2} If $d-1 \geqslant k\geqslant 1$, then $\Gamma(k,L)$ generates $J^ k(C)$.
\end{cor}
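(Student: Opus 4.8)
The plan is to argue by contradiction, reducing everything to the vanishing criterion of Lemma \ref{lem:class2}(iv). Recall that, by the definition of generation given above, $\Gamma(k,L)$ fails to generate $J^k(C)$ precisely when it is contained in a coset of a proper abelian subvariety $B\subsetneq J(C)$ (and, as noted there, this condition is independent of the chosen identification $J^k(C)\cong J(C)$). I would therefore assume that such a $B$ exists and derive $k=d$, which is excluded by the hypothesis $k\leqslant d-1$.

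The key idea is to quotient by $B$ \emph{itself}, rather than by a complementary subvariety. Concretely, I would apply Lemma \ref{lem:genera} taking the abelian subvariety there to be $A=B$, of dimension $a=\dim B$, and let $p\colon J^k(C)\to J':=J(C)/B$ be the induced map. Since $\Gamma(k,L)$ lies in a single coset of $B$, its image $p(\Gamma(k,L))$ is a single point, so $\dim p(\Gamma(k,L))=0$.

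On the other hand, Lemma \ref{lem:genera} computes $\dim p(\Gamma(k,L))=\min\{g-\dim B,\rho\}$. Because $B$ is a \emph{proper} abelian subvariety one has $g-\dim B>0$, so the equality $\min\{g-\dim B,\rho\}=0$ forces $\rho=0$. By Lemma \ref{lem:class2}(iv), $\rho=0$ holds exactly when $k=d$, contradicting $k\leqslant d-1$; hence no such $B$ exists and $\Gamma(k,L)$ generates $J^k(C)$.

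I expect the only delicate point to be the translation of the generation hypothesis into the containment of $\Gamma(k,L)$ in a coset of a proper abelian subvariety, together with the observation that one should divide by $B$ and not by its Poincar\'e complement (dividing by the complement would make $B$, and hence $\Gamma(k,L)$, surject onto the quotient and yield no information). Once the correct quotient is chosen, the proof is a pure dimension count feeding Lemma \ref{lem:genera} into Lemma \ref{lem:class2}(iv); in particular no use of $A$-stability is required, and the conclusion holds for every $1\leqslant k\leqslant d-1$ with no restriction on $L$.
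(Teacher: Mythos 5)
Your proof is correct and follows essentially the same route as the paper: both deduce generation from Lemma \ref{lem:genera} applied to the (putative) proper abelian subvariety containing a translate of $\Gamma(k,L)$, and then rule out $\rho=0$ via Lemma \ref{lem:class2}(iv) using $k\leqslant d-1$. The only cosmetic difference is that the paper first disposes of the case $k\leqslant r$ (where $\Gamma(k,L)=W_k(C)$) and states the deduction more tersely, whereas you spell out the coset/quotient argument uniformly for all $k$.
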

\begin{proof}  If $k\leqslant r$ then $\Gamma(k,L)=W_k(C)$ and the assertion is clear. Assume $k>r$.
By Lemma \ref {lem:genera}, $\Gamma(k,L)$ generates $J^ k(C)$ as soon as $\rho>0$, which is the case by Lemma \ref {lem:class2}, (iv). \end{proof}

\subsection{A useful lemma}

Let  $L$ be  a base point free $g^ 1_d$, let  $\phi_L\colon  C\to \bbP^ 1$ be the corresponding map and denote by $G_L$ the Galois group of $\phi_L$. 
\begin{lem} \label{lem:tetra}
If  $L$ is a base point free  $g^1_4$, then one of the following occurs:\\
\begin{inparaenum}
\item[(a)]  $C(2,L)$ is irreducible.\\
\item[(b)]  $C(2,L)$ has two components.  This  occurs if and only if  $G_L\cong \bbZ_2, \,\, \bbZ_4$.\\
\item[(c)]  $C(2,L)$ has 3 components. This occurs if and only if  $G_L\cong \Z_2^2$. 
\end{inparaenum}
\end{lem}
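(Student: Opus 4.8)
The plan is to reduce the whole statement to the monodromy of the degree-$4$ covering $\phi_L\colon C\to\bbP^1$ and then carry out a finite group-theoretic computation. Since $L$ is a base point free $g^1_4$, the map $\phi_L$ is finite of degree $4$; let $B\subset\bbP^1$ be its (finite) branch locus, set $U=\bbP^1\setminus B$, and restrict to the \'etale covering $\phi_L\inv(U)\to U$ of degree $4$. Because $C$ is irreducible, the associated monodromy representation $\pi_1(U)\to S_4$ has transitive image $M\subseteq S_4$, well defined up to conjugacy.

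First I would build an \'etale model of $C(2,L)$. Over $U$ consider the degree-$\binom 42=6$ \'etale covering $P\to U$ whose fibre over $t$ is the set of the six two-element subsets of $\phi_L\inv(t)$; the assignment $\{p,q\}\mapsto p+q$ is a morphism $P\to C(2,L)$ which is an isomorphism onto the open subset of $C(2,L)$ formed by the reduced divisors supported on a single smooth fibre. The divisors of $C(2,L)$ \emph{not} of this form meet $\phi_L\inv(B)$, hence are finite in number, so $\im(P)$ is dense in $C(2,L)$ and the two share the same irreducible components. By Lemma \ref{lem:irreducible}(ii), $C(2,L)$ has pure dimension $r=1$; thus no component can sit inside the finite complement, and each connected component of the smooth curve $P$ is exactly one irreducible component of $C(2,L)$, and conversely. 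Finally, the connected components of the \'etale cover $P\to U$ correspond bijectively to the orbits of $M$ on the set $\binom{[4]}2$ of two-element subsets of $\{1,2,3,4\}$. Hence \emph{the number of irreducible components of $C(2,L)$ equals the number of $M$-orbits on $\binom{[4]}2$}.

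Next I would run over the transitive subgroups of $S_4$, which up to conjugacy are $S_4$, $A_4$, the dihedral group $D_4$ of order $8$, the cyclic group $\bbZ_4=\langle(1234)\rangle$, and the Klein four-group $V_4=\{e,(12)(34),(13)(24),(14)(23)\}$, and count orbits on the six pairs. Both $S_4$ and $A_4$ are $2$-transitive, hence transitive on pairs: one orbit, so $C(2,L)$ is irreducible, case (a). For $D_4$ and for $\bbZ_4$ the two ``diagonal'' pairs $\{1,3\},\{2,4\}$ form one orbit and the four ``edge'' pairs $\{1,2\},\{2,3\},\{3,4\},\{4,1\}$ the other: two orbits, case (b). For $V_4$, acting regularly, the three complementary pairs $\{\{1,2\},\{3,4\}\}$, $\{\{1,3\},\{2,4\}\}$, $\{\{1,4\},\{2,3\}\}$ are three orbits of size two: three orbits, case (c).

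It remains to match the orbit count with the group $G_L$. Here $G_L=\Aut(C/\bbP^1)$ is the deck group of $\phi_L$, which is identified with $N_M(H)/H$, where $H\subseteq M$ is the stabilizer of a sheet; the cover is Galois exactly when $|G_L|=4$. A short computation gives $N_M(H)/H$ trivial for $M=S_4,A_4$; equal to $\bbZ_2$ for $M=D_4$ (generated by the image of the central involution $(13)(24)$); and equal to $\bbZ_4$ for $M=\bbZ_4$, respectively $V_4\cong\bbZ_2^2$ for $M=V_4$ (both regular, hence Galois). Combined with the previous paragraph this yields the stated equivalences: two components occur precisely when $G_L\cong\bbZ_2$ or $\bbZ_4$, and three components precisely when $G_L\cong\bbZ_2^2$. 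The one genuinely delicate point—and the main obstacle—is the reduction of the second paragraph: one must check that passing to the \'etale locus and then taking closures neither creates, destroys, nor merges components (this is exactly where the purity of dimension from Lemma \ref{lem:irreducible}(ii) and the finiteness of the ramification are used), and that the relevant ``Galois group'' is the deck group $N_M(H)/H$ rather than the full monodromy group $M$, which is what makes the value $\bbZ_2$ appear in case (b).
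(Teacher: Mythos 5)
Your proposal is correct and takes essentially the same route as the paper's proof: both pass to the monodromy representation of $\phi_L$, identify the irreducible components of $C(2,L)$ with the orbits of the monodromy group on the six order-two subsets of a general fibre, run through the transitive subgroups of $\mathfrak S_4$ (namely $\mathfrak S_4$, $\mathfrak A_4$, $D_4$, $\Z_4$, $\Z_2^2$), and match the orbit count with the deck group $G_L$ in each case. Your careful \'etale-model justification of the component--orbit correspondence and the uniform computation of $G_L$ as $N_M(H)/H$ simply spell out what the paper handles by invoking the argument of \cite[Lemma 12.7.1]{birk} and by an ad hoc geometric construction in the $D_4$ case.
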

\begin{proof}
We argue as in the proof  of \cite[Lemma 12.7.1]{birk}. Let $\Delta\subset \pp^1$ be the set of critical values of $\phi_L\colon  C\to \pp^ 1$,  let $\rho\colon \pi_1(\pp^1\setminus \Delta)\to \mathfrak S_4$ be the monodromy representation and let $\Sigma:={\rm Im}(\rho)$. The group $\Sigma$ acts transitively on $\mathbb I_4:=\{1,2,3,4\}$ (identified with the general divisor $x_1+\ldots+x_4\in L$) 
and its order $s$  is divisible by 4. The irreducible components of  $C(2,L)$ are in 1--to--1 correspondence with the $\Sigma$--orbits of the order two subsets of $\mathbb I_4$.
If $\Sigma$ contains an element of order 3, then $s$ is divisible by 12.  Hence $\mathfrak A_4\subseteq \Sigma$, thus the action is transitive, $C(2,L)$ is irreducible and  (a)  holds.

Assume  $\Sigma$ is a $2$--group. If $s=8$ then $\Sigma$ is a $2$--Sylow subgroup of $\mathfrak S_4$, hence $\Sigma$ is  the dihedral group $D_4$. Then  $C$ is obtained from a $\Sigma$--cover $C_0\to \pp^1$ by moding out 
 by a reflection $\sigma\in \Sigma$. In the $\Sigma$-action  on $\mathbb I_4$ we may assume that an  element of order 4 acts by sending 
 $i$ to ${i+1}$  modulo 4, for $1\leqslant i\leqslant 4$.  So the order 2 element in the center of  $\Sigma$ induces the involution $\iota$ of $C$ that maps $i$ to ${i+2}$ modulo 4, for $1\leqslant i\leqslant 4$, and $\iota$ generates $G_L\cong \bbZ_2$.  There are 
two orbits for the $\Sigma$--action on the set of order two subsets of $\mathbb I_4$: one of order 2 given by $\{\{1,3\}, \{2,4\}\}$, the other of order 4 given by $\{ \{i,i+1\}, \,\, \text {for}\,\, 1\leqslant i\leqslant 4\}$ (here $i$ is taken modulo $4$). These orbits  respectively correspond to  two components 
$E_1, E_2$ of $C(2,L)$ and we are in case (b). 

Assume  $s=4$. Then $\phi_L$ is Galois with  $G_L=\Sigma$. If $G_L\cong \mathbb Z_4$, then the $\Sigma$--orbits on the set of order two subsets of $\mathbb I_4$ are as in the previous case. 
If $G_L\cong \Z_2^2$, then 
one has $G_L=\{\Id, (12)(34), (13)(24),(14)(32)\}$. There are then three orbits, corresponding to three components $E_1,E_2,E_3$.\end{proof}

One can be more precise about the components $E_i$ of $C(2,L)$ in Lemma \ref {lem:tetra}, whose geometric genera we denote by $g_i$ (with $1\leqslant i\leqslant 2+\epsilon$, and $\epsilon =0$ in case (b), $\epsilon =1$ in case (c)).

\begin{lem}\label {lem:tetra2} Same setting and notation as in Lemma \ref {lem:tetra} and its proof.
Then:\\
\begin{inparaenum}[(i)]
\item each component of $C(2,L)$ maps birationally to its image in $J^ 2(C)$ unless $C$ is hyperelliptic and $L$ is  composed with the hyperelliptic involution $\mathcal L$: in this case one of the components of $\Gamma(2,L)$ is $\mathcal L=C(2,\mathcal L)$, which is contracted to a point in $J^ 2(C)$;\\
\item in case  (b) one has  $E_1\cong C/\iota$ (where $\iota$ is the non--trivial involution in $G_L$), which is hyperelliptic, and the abelian subvariety of $J^2(C)\cong J(C)$ generated by $j(E_1)$ is  the $\iota$--invariant part of $J(C)$.  Moreover  $2g_2\geqslant g$;\\
\item In case (c) one has $E_i:=C/\iota_i$ where $\iota_i$ are the three nonzero elements of $G_L$, for $1\leqslant i \leqslant 3$.
\end{inparaenum}
\end{lem}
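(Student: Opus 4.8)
Here is the plan. Throughout write $\iota_1,\iota_2,\iota_3$ for the nontrivial involutions when $G_L\cong\Z_2^2$, and $\iota$ for the central involution in case (b); recall from Lemma \ref{lem:tetra} that the components of $C(2,L)$ correspond to $\Sigma$--orbits of $2$--element subsets of $\mathbb I_4$.

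\emph{Part (i).} By Lemma \ref{lem:irreducible}, (ii), a component $E_i$ is birational to its preimage $\mathcal V_i$ in $\mathcal C(2,L)$, so each $E_i$ is an irreducible curve whose general point is an honest divisor $D=x+y$ of degree $2$ contained in a general fibre of $\phi_L$. The restriction $j|_{E_i}$ to $E_i$ of the Abel--Jacobi map $j\colon C(2)\to J^2(C)$ fails to be birational onto its image only if it is not generically injective; since two distinct effective divisors of degree $2$ are linearly equivalent exactly when they move in a $g^1_2$, this forces the general $D\in E_i$ to satisfy $h^0(\OO_C(D))\ge 2$, whence $C$ is hyperelliptic, $|D|$ is the hyperelliptic pencil $\mathcal L$, and the whole component $E_i$ coincides with $\mathcal L=C(2,\mathcal L)\cong\bbP^1$, which $j$ contracts to a point of $J^2(C)$. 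This occurs precisely when $\phi_L$ factors through the hyperelliptic quotient, i.e. $L$ is composed with $\mathcal L$. In all other cases every $E_i$ maps birationally to its image.

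\emph{Parts (ii), first assertion, and (iii).} The key observation is that in the two Galois cases the relevant orbits are exactly the orbits of a single involution. When $G_L\cong\Z_2^2$ the three nonzero elements act as $(12)(34),(13)(24),(14)(32)$, and the orbit $\{\{1,2\},\{3,4\}\}$ of a $2$--subset is precisely the partition of $\mathbb I_4$ into $\iota_1$--orbits; hence the general divisor of the corresponding component is $x+\iota_1 x$, i.e. a fibre of the quotient $\pi_1\colon C\to C/\iota_1$, and the assignment $\bar x\mapsto \pi_1^{-1}(\bar x)$ gives an isomorphism $E_i\cong C/\iota_i$, proving (iii). The same computation applied to the size--$2$ orbit $\{\{1,3\},\{2,4\}\}$ in case (b) gives $E_1\cong C/\iota$. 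Since $\phi_L=\psi\circ\pi$ with $\pi\colon C\to C/\iota$ and $\psi\colon C/\iota\to\bbP^1$ of degree $2$, the curve $C/\iota$ is hyperelliptic. Finally $j(E_1)=\{[x+\iota x]\}=\pi^*\big(\{[\bar x]\}\big)$ is a translate of $\pi^*W_1(C/\iota)$, so the abelian subvariety it generates is $\pi^*J(C/\iota)$, which is the connected component of $\{\alpha:\iota^*\alpha=\alpha\}$, i.e. the $\iota$--invariant part of $J(C)$.

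\emph{Part (ii), the inequality $2g_2\ge g$.} Here I pass to the Galois closure $C_0\to\bbP^1$, which in case (b) carries a $D_4=\langle r,\sigma\rangle$--action with $C=C_0/\langle\sigma\rangle$, $E_1\cong C/\iota=C_0/\langle r^2,\sigma\rangle$, and, computing the stabilizer of the adjacent pair $\{1,2\}$, $\tilde E_2\cong C_0/\langle r\sigma\rangle$ (a reflection in the conjugacy class opposite to $\sigma$). Writing $a_\chi$ for the multiplicity of an irreducible character $\chi$ of $D_4$ in $H^0(C_0,\omega_{C_0})$ and $a=a_\rho$ for that of the $2$--dimensional irreducible $\rho$, the Chevalley--Weil genus formula $g(C_0/K)=\sum_\chi(\dim\chi^K)\,a_\chi$ together with $\dim\rho^{\langle\sigma\rangle}=\dim\rho^{\langle r\sigma\rangle}=1$ and $\dim\rho^{\langle r^2,\sigma\rangle}=0$ yields $g=g_1+a$ and $g_2=g(\tilde E_2/\iota)+a$, hence $g_2\ge g-g_1$. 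Combining this with $g_1\le g/2$ gives $2g_2\ge 2(g-g_1)\ge g$. The one genuinely delicate point — which I expect to be the main obstacle — is establishing $g_1\le g/2$, equivalently that $\pi\colon C\to C/\iota$ is ramified: this must be extracted from the monodromy, using that $\Sigma=D_4$ forces a transposition (a class--$\sigma$ reflection) to occur among the local monodromies of $\phi_L$, which produces a fixed point of $\iota$ on $C$ and hence ramification of $\pi$. I would carry out this ramification bookkeeping carefully, as it is exactly what rules out the degenerate étale configuration in which the stated inequality could otherwise fail.
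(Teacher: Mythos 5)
Your parts (i), (iii) and the first assertions of (ii) are fine; the paper itself dismisses these as the trivial claims and devotes its proof entirely to $2g_2\geqslant g$, so that inequality is where the comparison matters. There your route is genuinely different from the paper's: the paper computes $g_0$, $g$ and $g_2$ by Riemann--Hurwitz in terms of the fixed-point counts $m,n,n',\epsi$ of the elements of $D_4$ on $C_0$, proves a divisibility claim ($m+n$, $m+n'$, $n+n'$ positive and divisible by $4$), identifies $E_2$ birationally with $C_0/\langle\sigma\rho\rangle$, and gets a numerical contradiction from $2g_2\leqslant g-1$. Your decomposition of $H^0(\omega_{C_0})$ into isotypic pieces, giving $g=g_1+a_\rho$ and $g_2=g(\tilde E_2/\iota)+a_\rho$ (I checked the character computations: $g_1=a_{\chi_2}$, $g_2=a_{\chi_3}+a_\rho$, with $\chi_2,\chi_3$ the linear characters trivial on $\langle r^2,\sigma\rangle$, $\langle r^2,r\sigma\rangle$ respectively), replaces all that bookkeeping by the single inequality $g_1\leqslant g/2$, i.e.\ by the ramification of $\pi\colon C\to C/\iota$. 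This is an attractive simplification, but as written it has two genuine gaps.

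First, case (b) of Lemma \ref{lem:tetra} also contains the subcase $G_L\cong\Z_4$, where $\phi_L$ is itself Galois: the Galois closure is then $C$ with a $\Z_4$-action, and there is no $D_4$ anywhere, so your entire argument is silent on it. It is an easy case --- $E_2$ is the image of the graph of the order-$4$ automorphism, hence $g_2=g$, which is how the paper starts --- but your proof must say this. Second, and more seriously, the step you yourself flag as the obstacle, namely ramification of $\pi$, is not only unproven but mis-sketched: it is \emph{not} true that $\Sigma=D_4$ forces a class-$\sigma$ reflection among the local monodromies. For instance the datum $(r,r,r\sigma,r^3\sigma)$ has product $1$, generates $D_4$, and contains no class-$\sigma$ reflection, so by Riemann existence such covers occur. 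The correct argument is: if $\iota$ acted freely on $C$, then no point of $C_0$ would be fixed by $r^2$ or by $r^2\sigma$; since $\mathrm{Fix}(r)\cup\mathrm{Fix}(r^3)\subseteq\mathrm{Fix}(r^2)$ and $\#\mathrm{Fix}(\sigma)=\#\mathrm{Fix}(r^2\sigma)$ by conjugacy, every nontrivial element with a fixed point would then lie in $\{r\sigma,r^3\sigma\}$, forcing $\Sigma\subseteq\langle r\sigma,r^3\sigma\rangle\cong\Z_2^2$, contradicting $\Sigma=D_4$. Note also the subtlety your sketch glosses over: a fixed point of $\sigma$ itself does \emph{not} descend to a fixed point of $\iota$ on $C=C_0/\langle\sigma\rangle$; one must pass to the conjugate reflection $r^2\sigma$, whose fixed points satisfy $r^2P=\sigma P$ and hence do descend. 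With the $\Z_4$ subcase added and the ramification step repaired as above, your proof is complete and is a cleaner alternative to the paper's computation.
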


\begin{proof} We prove the only non--trivial assertion, i.e., $2g_2\geqslant g$  in part (ii).

First assume $G_L=\Z_4=\langle \rho\rangle$. 
Consider in $C(2)$ the curves $E_1=\{P+\rho^2(P)\ |\ P\in C\}$ and  $E_2=\{P+\rho (P)\ |\ P\in C\}$.   One has  $C(2,L)=E_1\cup E_2$ and $E_1\cong C/\rho^2$. 
The curve $E_2$ is the image in $C(2)$ of the graph of $\rho$, so  $g_2=g$.

Suppose now $G_L=\Z_2$ and $\Sigma=D_4$.  Recall that $C$ is obtained from a $D_4$--Galois cover $f\colon  C_0\to \pp^1$ by moding out 
 by a reflection $\sigma\in D_4$ (see the proof of Lemma \ref {lem:tetra}). Denote by $g_0$ the genus of $C_0$. Let $\rho\in D_4$ be an element  of order 4, so that $D_4=\langle \sigma,\rho\rangle$. Let $n$ be the number of points of $C_0$ fixed by $\sigma$, $n'$ the number of points fixed by $\sigma \rho$,  $m$ the number of points fixed by $\rho$ and $m+\epsi$ the number of points fixed by $\rho^2$.  The Hurwitz formula, applied to $C_0\to \pp^ 1$ and to $C_0\to C=C_0/\sigma$, gives
\begin{equation}\label{eq:gC}
g_0=\frac 32m+n+n'+\frac \epsi 2-7, \,\,\,\, {\rm and} \,\,\,\,  g= \frac {n'} 2+\frac n 4+\frac \epsi 4-\frac 34 m-3.
\end{equation}

\begin{claim}\label{lem:rs} 
\begin{inparaenum}
\item $m$, $n$, $n'$ and $\epsi$  are even.\\
\item $n+m\equiv n'+m\equiv n+n'\equiv 0 \mod 4$, and at most one among $m,n,n'$ can be 0.
\end{inparaenum}
\end{claim}
\begin{proof} [Proof of the Claim]  (i) The numbers $n, n'$ and $m+\epsi$ are even because $\sigma, \sigma \rho$ and $\sigma \rho^ 2$ are involutions. If $P\in C_0$ is fixed by $\rho$, then $\sigma(P)$ is also fixed by $\rho$. Since  the stabilizer of any point is cyclic, then $\sigma(P)\neq P$. This implies that  $m$ is even.

(ii) Consider the $\mathbb Z_2^ 2$--cover  $D:=C_0/\rho^2 \to\pp^1$. The  cardinalities  of the images in $\pp^1$ of the fixed  loci of the three involutions are  $n/2$, $n'/2$ and $m/2$. 
Indeed, denote by $\gamma_1$ [resp. by $\gamma_2$]  the image  of $\sigma$ [resp.  of $\rho$]  in $D_4/\rho^2\cong \Z_2^2$. Let $Q\in \pp^1$ be a branch point whose preimage in $D$  is fixed   by   $\gamma_1$. Then the preimage of $Q$ in $C_0$  consists of 4 points,  two of which fixed by $\sigma$ and   two  by $\sigma\rho^2$, so the number of such points $Q$ is $(2n)/4=n/2$.  Similarly, the image  in $\pp^1$ of the set of points of $D$  fixed by $\gamma_2$ has cardinality $m/2$ and the image of the set of points fixed by $\gamma_1\gamma_2$ has cardinality $n'/2$.
Hurwitz formula for $D_1:=D/\gamma_1\to \pp^1$ gives 
 \[ 2g(D_1)-2=\frac m2+\frac {n'}2-4,
 \]
hence $m+n'>0$ is divisible by 4. Similarly  $n+n'$ and $m+n$ are positive and  divisible by 4.
\end{proof}

We compute now the ramification of $f\colon  C_0\to \pp^ 1$ and of  $L$. Write the $D_4$--orbit of $P\in C_0$ general as
\begin{equation}\label{eq:fibra}
\begin{array}{llll}
P& \rho (P)&\rho^2(P)&\rho^3(P)\\
\sigma (P)& \sigma \rho (P)&\sigma \rho^2(P)&\sigma \rho^3(P).
\end{array}
\end{equation}
Denote by $Q_1,\dots, Q_4$ the images in $C$ of  the points in the first row (or, what is the same, in the second row) of \eqref {eq:fibra}. The singular fibers of $f$ occur when $P$ has non trivial stabilizer, i.e., when:\\
\begin{inparaenum} [$\bullet$]
\item  $P\in C_0$ is  fixed by $\rho$. The fiber of $f$ is  $4(P+\sigma(P))$ and the corresponding divisor of 
$L$  is  $4Q_1$. There are $m/2$ divisors of $L$ of this type;\\
\item  $P$ is fixed by $\rho^2$ but not by $\rho$. The fiber of $f$ is  $2(P+\sigma(P)+\rho(P)+\sigma\rho(P))$.
Then $Q_1=Q_3$,  $Q_2=Q_4$, and the corresponding divisor of $L$ is $2(Q_1+Q_2)$. There are $\epsi/4$ such divisors;\\
\item  $P$ is fixed by $\sigma$.   The fibre of $f$ is $2(P+\rho(P)+\rho^ 2(P)+\rho^ 3(P))$, and 
$Q_2=Q_4$, while $Q_1,  Q_2$ and $Q_3$  are distinct, so the corresponding divisor of $L$ is $Q_1+2Q_2+Q_3$ and there are $n/2$ such divisors; \\
\item $P$ is fixed by $\sigma\rho^2$. This is the same as the previous case;\\
\item $P$ is fixed by $\sigma \rho$.  The fibre of $f$ is again $2(P+\rho(P)+\rho^ 2(P)+\rho^ 3(P))$, and 
\ $Q_1=Q_2$, $Q_3=Q_4$, so the corresponding divisor of $L$ is $2Q_1+2Q_3$ and there are $n'/2$ such fibers; \\
\item $P$ is fixed by $\sigma\rho^3$. This  is the same as the previous case.
\end{inparaenum}

Denote by $\iota$ the involution of $C$ induced by $\rho^2$. Then  $C(2,L)$ is the union of $E_1=\{P+\iota (P)\ |\ P\in C\}=C/\iota$ and of the irreducible curve $E_2$. Keeping the above notation, if $Q_1+\cdots+ Q_4$ is the general divisor of $L$, then $E_1$ is described by the divisors $Q_1+Q_3, Q_2+Q_4$ and $E_3$ by  $Q_1+Q_2, Q_1+Q_4, Q_2+Q_3, Q_3+Q_4$.  

To compute $g_2$,
define a map $\phi\colon C_0\to E_2$  by sending $P\in C_0$ to the image via $C_0\to C$ of the divisor 
$P+\rho(P)$, i.e.,  $Q_1+Q_2\in E_2$.

\begin{claim} \label{lem:gE_2}
\begin{inparaenum}
\item $\deg(\phi)=2$ and  $E_2$ is birational to $C_0/\sigma\rho$;\\
\item  $g_2=\frac 34 m+\frac n 2+\frac {n'}4+\frac \epsi 4-3$.
\end{inparaenum}
\end{claim}
\begin{proof} [Proof of the Claim]
(i)  Let $Q+Q'\in E_2$ be a general point and let  $P, \sigma (P)$ [resp. $P', \sigma (P')$] be the preimages of $Q$ [resp. of $Q'$] on $C_0$. Since $Q+Q'$ is of the form 
$Q_1+Q_2, Q_1+Q_4, Q_2+Q_3$, or  $Q_3+Q_4$,  
we may assume that $P'=\rho (P)$, so that $\psi^ {-1}(Q+Q')=\{P, \sigma\rho(P)\}$.

Part (ii) follows by applying Hurwitz formula.
\end{proof}

Finally, suppose by contradiction  that $2g_2\leqslant g-1$. 
Then, by (ii) of Claim \ref{lem:gE_2} and by \eqref{eq:gC}, we would have $3(m+n)+\epsi\le 8$, hence $m+n\le 2$, contradicting (ii) of Claim \ref{lem:rs}.\end{proof}

\section{Abelian subvarieties of Brill--Noether loci}\label{sec:absub}

As in  \cite{ah,df}, we consider $Z\subseteq W^s_d(C)\subsetneq J^ d(C)$ an irreducible  $A$--stable variety of dimension $r$,  with $A\subsetneq J(C)$ an abelian subvariety of dimension $a>0$. Note that $r\geqslant a$, with equality if and only if $Z\cong A$. Moreover, since $W^s_d(C)\subsetneq J^ d(C)$,  the general linear series $L\in Z$ is special,  thus $s>d-g$ and $d\geqslant 2s$ by Clifford's theorem. From \cite[Proposition 3.3]{df},  we have
\begin{equation}\label{eq:DF}
r+a+2s\leqslant d.
\end{equation}

 In this section, we  classify the cases in which equality holds in \eqref {eq:DF}, thus improving the partial results in \cite {df} on this subject.  Note that if equality holds in  \eqref {eq:DF}, then  $Z\not \subseteq  W^{s+1}_d(C)$ and  $A$ is a maximal abelian subvariety of $J(C)$ such that $Z$ is $A$--stable.

\subsection{The Theta divisor case}  As in \cite {df}, we first consider the case $(d,s)=(g-1,0)$,
i.e.,  $Z\subset W_{g-1}(C)=\Theta$ is an irreducible $A$--stable variety of dimension $r=g-1-a$. 
Then $Z\not\subseteq  W^ 1_{g-1}(C)={\rm Sing}(\Theta)$, i.e., $Z$ has a non--empty intersection with 
$\Theta_{\rm sm}:=\Theta - {\rm Sing}(\Theta)$.

\begin{thm}\label{thm:main}
Let $C$ be a curve of genus $g$. Let $A\subsetneq J(C)$ be an abelian variety of dimension $a>0$ and $Z\subset \Theta$ an irreducible,  $A$--stable variety of dimension $r=g-1-a$.
Then there is a degree 2 morphism $\varphi\colon C\to C'$, with $C'$ smooth of genus $g'$, such that one of the following occurs:\\
\begin{inparaenum}
\item[(a)] $g'=a$, $A=\varphi^*(J(C'))$ and $Z=W_{g-1-2a}(C)+\varphi^*(J^a(C'))$;\\
\item[(b)] $g'=r+1$, $\varphi$ is \'etale, $A$ is  the  Prym variety of $\varphi$ and $Z\subset W_{g-1}(C)$ is the connected component of $\varphi_*\inv(K_{C'})$ consisting of  divisor classes $D$ with $h^0(\mathcal O_C(D))$ odd, where $\varphi_*\colon J^{g-1}(C)\to J^{g-1}(C')$ is the {norm map}. 
\end{inparaenum}
In particular, $Z\cong A$ is an abelian variety if and only if either we are in case (a) and $g=2a+1$, or  in case (b). 
\end{thm}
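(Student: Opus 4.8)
The plan is to follow the strategy sketched in the introduction: from $Z$ produce a base-point-free sublinear series of the canonical series, show that $A$-stability forces it to be non-birational, and then read off the two cases from the geometry of the resulting degree $2$ cover.

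First I would use that $Z\subset\Theta=W_{g-1}(C)$ meets $\Theta_{sm}$ to produce $L$ via the Gauss map. At a smooth point $z=[D]\in Z$ the orbit $z+A\subseteq Z\subseteq\Theta$ gives $T_z\Theta\supseteq T_eA$ (under the translation identification $T_zJ\cong H^0(C,K_C)^*$), so the Gauss map $\mathcal{G}\colon\Theta_{sm}\to(\mathbb{P}^{g-1})^\vee$, with $\mathbb{P}^{g-1}=\mathbb{P}(H^0(C,K_C)^*)$, sends $z$ to a hyperplane containing the fixed $(a-1)$-plane $\mathbb{P}(T_eA)$. Dually these hyperplanes form a linear system of dimension $r=g-1-a$, i.e. the divisors of a sublinear series $L\subseteq|K_C|$ cut out by $W=(T_eA)^\perp\subset H^0(C,K_C)$, $\dim W=r+1$. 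Since for general $z=[D]$ one has $D\le\mathcal{G}(z)\in L$, the Abel--Jacobi preimage $\tilde Z\subset C(g-1)$ of $Z$ lies in $C(g-1,L)$; as $\dim\tilde Z=r$ equals the pure dimension of $C(g-1,L)$ (Lemma \ref{lem:irreducible}(ii), valid since $g-1\ge r$), $\tilde Z$ is one of its irreducible components. Base points of $L$ and the hyperelliptic situation would have to be isolated and treated separately at this stage.

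Next I would prove $L$ non-birational. If $L$ were birational, Lemma \ref{lem:irreducible}(iv) would make $C(g-1,L)$ irreducible, hence equal to $\tilde Z$, so that $\Gamma(g-1,L)=Z$ would be $A$-stable; but then Corollary \ref{cor:gen} forces $\rho=\dim\Gamma(g-1,L)=g$, contradicting $\dim Z=r=g-1-a<g$. Thus $L=f^*(L')$ with $f\colon C\to C'$ of degree $\nu=\deg f\ge 2$ onto the normalization $C'$ of the image, $L'$ birational and base-point-free, and $\nu\delta=\deg L=2g-2$. I would then describe the components of $C(g-1,L)$ by partitions $\underline m$ of $g-1$ with parts $\le\nu$ (Lemma \ref{lem:reducible}), with $\tilde Z$ corresponding to one such $\underline m$. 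The requirements that $\tilde Z$ be $A$-stable and that $j|_{\tilde Z}$ be generically finite onto $Z$ (not contracted, so that $\dim Z=r$; cf. Lemma \ref{lem:tetra2}(i)) should force $\nu=2$. Writing $\varphi:=f$ and letting $\iota$ be its deck involution acting on $H^0(C,K_C)$, the subspace $W$ becomes compatible with the $\iota$-eigenspace decomposition, and $A$ is recognized either as $\varphi^*(J(C'))$ or as the Prym variety of $\varphi$. Comparing $\dim A=a$ with the genus $g'$ of $C'$, and whether $\varphi$ is ramified or \'etale, separates the ramified case (a) ($g'=a$, $Z=W_{g-1-2a}(C)+\varphi^*(J^a(C'))$) from the \'etale case (b) ($g'=r+1$, $A$ the Prym, $Z$ the odd component of $\varphi_*^{-1}(K_{C'})$).

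I expect this last step — pinning down $\nu=2$ and matching the partition and eigenspace data to the precise description of $Z$ — to be the main obstacle, and the place where the tetragonal analysis of Lemmas \ref{lem:tetra}--\ref{lem:tetra2} enters in the delicate low-degree subcases. Finally, for the concluding equivalence: $Z\cong A$ forces $r=\dim Z=\dim A=a$. In case (b) one has $Z\cong A$ by construction. In case (a), $Z=W_{g-1-2a}(C)+\varphi^*(J^a(C'))$ is a translate of the abelian variety $\varphi^*(J^a(C'))\cong A$ exactly when the summand $W_{g-1-2a}(C)$ is a point, i.e. when $g-1-2a=0$; for $g-1-2a>0$ the locus $W_{g-1-2a}(C)$ is not a translate of an abelian subvariety, so $Z$ is not abelian. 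Hence $Z\cong A$ if and only if we are in case (b), or in case (a) with $g=2a+1$.
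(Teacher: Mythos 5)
Your first half tracks the paper's own argument closely: the Gauss-map construction of $L=\pp(T^\perp)$ and the identification of $Z$ (via Abel--Jacobi) with a component of $C(g-1,L)$ are Lemmas \ref{lem:im} and \ref{lem:nobir}, (i)--(ii), and your non-birationality argument (irreducibility of $C(g-1,L)$ via Lemma \ref{lem:irreducible}, (iv), then Corollary \ref{cor:gen} forcing $\rho=g$) is exactly Lemma \ref{lem:nobir}, (iii). The base-point-freeness you defer is actually quick and should not be waved off: if $P$ were a base point then either every $D\in Z$ contains $P$, and $D\mapsto D-P$ embeds the $A$-stable variety $Z$ into $W_{g-2}(C)$, contradicting \eqref{eq:DF}, or one runs the same argument on the residual variety $\sigma(Z)$ (Lemma \ref{lem:free}).

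The genuine gap is in your claim that $A$-stability of $\tilde Z$ and generic finiteness of $j|_{\tilde Z}$ ``should force $\nu=2$.'' That step would fail: what the hypotheses give is only $\nu\leqslant 4$, since $\delta=\frac{2g-2}{\nu}\geqslant r$ (the image spans $\pp^r$) and $r\geqslant a$ gives $r\geqslant\frac{g-1}{2}$; and $\nu=4$ genuinely occurs under the hypotheses of the theorem. For instance, in case (b) with $C'$ hyperelliptic one has $L=\varphi^*\vert K_{C'}\vert$, so $\phi_L$ is the \'etale double cover followed by the $2$-to-$1$ canonical map of $C'$, i.e.\ a $g^1_4$ composed with the Veronese $\pp^1\to\pp^r$; the degree $2$ map $\varphi$ of the statement is then \emph{not} the normalization $f$ of $\phi_L$, but an intermediate quotient that must be extracted from the Galois/monodromy analysis of $C(2,\mathcal L)$ for a $g^1_4$ (Lemmas \ref{lem:tetra} and \ref{lem:tetra2}, applied in Lemma \ref{lem:degL}, where one must also distinguish $G_L\cong\Z_2,\Z_4$ from $G_L\cong\Z_2^2$ to land in case (a) versus the Prym case (b)). Moreover $\nu=3$ is not excluded for free: the paper rules it out by a dimension count on the auxiliary map $h\colon Z\dasharrow J^\tau(C)$ ending in the contradiction $\mu'=\mu_3=0$ (Lemma \ref{lem:key}). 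Finally, even for $\nu=2$ the identification of the two cases is the heart of the matter: one must determine the partition $\underline m$, and either get $Z=W_\mu(C)+f^*(J^a(C'))$ (case (a)) or $\underline m=(1^{g-1})$, in which case $Z$ lies in the generalized Prym variety and the eigenspace decomposition of $H^0(K_C)$ forces $A=P$, $g=2r+1$ and $f$ \'etale (case (b)). Your proposal compresses all of Lemmas \ref{lem:key} and \ref{lem:degL} into ``matching the partition and eigenspace data,'' which is precisely where the substance of the proof lies; as written, the route hinges on an intermediate claim that is false. (Your concluding equivalence, by contrast, is fine: in case (a) $Z$ is abelian iff $W_{g-1-2a}(C)$ is a point, since for $0<k<g$ the variety $W_k(C)$ is birational to $C(k)$, which is of general type and so contains no translate of a positive dimensional abelian variety through its general point.)
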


\begin{rem} Cases (a) and (b) of Theorem \ref{thm:main} are not mutually exclusive. Indeed,  if the curve $C$ in case (b) is hyperelliptic, then the Abel-Prym map $C\to A$  induces a $2$-to-$1$ map $\psi\colon C\to D$, where $D$ is  a smooth curve  embedded into    $A\cong J(D)$ by the Abel-Jacobi map. One can  check that $A=\psi^*(J^r(D))$, namely  this is also an instance  of   case (a) of Theorem \ref{thm:main}.
\end{rem}

The proof of Theorem \ref{thm:main}  requires various preliminary lemmas.  First, recall that the tangent space to $J(C)$ at $0$ can be identified with $H^ 1(\mathcal O_C)\cong H^0(K_C)^*$. 
Denote by $T\subseteq H^0(K_C)^*$ the tangent space to $A$ at $0$ and  by $L$ the linear series $\pp(T^{\perp})\subseteq |K_C|$.  One has $\dim(L)=g-1-a=r$.

Since $Z\cap \Theta_{\rm sm}\not= \emptyset$,   the Gauss map of $\Theta$ restricts to a  rational map $\gamma \colon Z \dasharrow  \pp:=\vert K_C\vert$.

\begin{lem}\label{lem:im} One has $\gamma(Z)=L$.
\end{lem}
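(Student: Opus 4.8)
The plan is to analyze the Gauss map of the theta divisor via its standard description in terms of canonical divisors. Recall that the Gauss map $\gamma\colon\Theta_{\mathrm{sm}}\dasharrow |K_C|$ sends a smooth point of $\Theta$, represented by an effective divisor class $D$ of degree $g-1$ with $h^0(\mathcal O_C(D))=1$, to the unique canonical divisor containing $D$; equivalently, if $D=p_1+\dots+p_{g-1}$ with the $p_i$ distinct and in general position, then $\gamma(D)$ is the hyperplane in $\mathbb P^{g-1}$ spanned by the images $\phi_{K_C}(p_i)$ under the canonical map, viewed as an element of $|K_C|$. The restriction of $\gamma$ to $Z$ is well defined because $Z\cap\Theta_{\mathrm{sm}}\neq\emptyset$, as noted just before the statement.

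The key point is to relate the tangent-space description of $A$ to the geometry of this Gauss map. First I would recall the classical fact that the image $\gamma(D)\in |K_C|\cong\mathbb P(H^0(K_C))=\mathbb P^{g-1}$ is dual to the tangent space $T_D\Theta\subset T_0 J(C)=H^1(\mathcal O_C)\cong H^0(K_C)^*$: the tangent hyperplane to $\Theta$ at a smooth point $D$, as a hyperplane in $H^0(K_C)^*$, corresponds under the identification $|K_C|=\mathbb P(H^0(K_C))$ precisely to the canonical divisor $\gamma(D)$. This is the content of the Riemann singularity / Gauss map description. Now, because $Z$ is $A$-stable, for every $D\in Z\cap\Theta_{\mathrm{sm}}$ the tangent space $T_D Z$ contains the translate of $T=T_0 A\subseteq H^0(K_C)^*$; and since $Z\subseteq\Theta$, the tangent space $T_D\Theta$ contains $T_D Z\supseteq T$. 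Therefore the hyperplane $T_D\Theta\subset H^0(K_C)^*$ contains $T$, which means that, dually, the point $\gamma(D)\in\mathbb P(H^0(K_C))$ lies in the annihilator $\mathbb P(T^{\perp})$. But $\mathbb P(T^{\perp})$ is exactly the sublinear series $L$ defined in the excerpt. Hence $\gamma(Z)\subseteq L$.

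To finish I would establish the reverse inclusion, i.e. that $\gamma(Z)$ fills up all of $L$, by a dimension count. We have $\dim L=g-1-a=r=\dim Z$, so it suffices to show $\gamma|_Z$ is generically finite onto its image, equivalently that a general fibre is finite. The general fibre of $\gamma$ on $\Theta$ is finite (the Gauss map of $\Theta$ is generically finite, indeed of degree $\binom{2g-2}{g-1}$ for non-hyperelliptic $C$, and has positive-dimensional fibres only along the contracted ruled loci in the hyperelliptic case). The obstruction, and the step I expect to require the most care, is ruling out that $Z$ is entirely swept out by positive-dimensional fibres of $\gamma$ — which could a priori force $\dim\gamma(Z)<r$ and hence a proper containment in $L$. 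Here I would use the $A$-stability crucially: if $\gamma$ contracted $Z$ to a subvariety of dimension $<r$, then $\gamma(Z)$ would be a proper closed subvariety of the $r$-dimensional $L$, yet the $A$-orbits through a general $D\in Z$ would have to lie in single fibres of $\gamma$, forcing $A$ to lie in the locus where $\Theta$ is ruled; one then contradicts either the irreducibility and dimension of $Z$ or the maximality of $A$. Combining $\gamma(Z)\subseteq L$ with the matching dimensions and generic finiteness yields $\gamma(Z)=L$, since $L$ is irreducible.
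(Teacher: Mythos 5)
Your first step is correct and is exactly what the paper leaves implicit in the sentence ``Since $Z$ is $A$--stable, one has $\gamma(Z)\subseteq L$'': $A$--stability puts $T=T_0A$ inside $T_D(D+A)\subseteq T_D\Theta$ for every $D\in Z\cap\Theta_{\rm sm}$, and the duality between the Gauss image and the tangent hyperplane then gives $\gamma(D)\in\mathbb{P}(T^{\perp})=L$. Your overall frame for the converse (match dimensions, then use irreducibility of $L$) is also the paper's.

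The gap is in how you establish that $\gamma|_Z$ is generically finite. You present this as a delicate point --- ruling out that $Z$ is swept by positive--dimensional fibres of $\gamma$ --- and you resolve it only by an unsubstantiated sketch (``forcing $A$ to lie in the locus where $\Theta$ is ruled; one then contradicts either the irreducibility and dimension of $Z$ or the maximality of $A$''), which is not a proof. In fact there is nothing to rule out: the Gauss map has \emph{no} positive--dimensional fibres through points of $\Theta_{\rm sm}$, for any curve, hyperelliptic or not. Indeed, if $D\in\Theta_{\rm sm}$ then $h^0(\mathcal O_C(D))=h^0(\mathcal O_C(K_C-D))=1$, so $\gamma(D)$ is the \emph{unique} effective canonical divisor containing $D$; consequently any $D'\in\Theta_{\rm sm}$ with $\gamma(D')=\gamma(D)$ is an effective divisor of degree $g-1$ contained in the fixed effective divisor $\gamma(D)$ of degree $2g-2$, and there are at most $\binom{2g-2}{g-1}$ such divisors. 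This one--line observation is the paper's entire argument for this step: it uses no $A$--stability, no genericity, and no hyperelliptic dichotomy, and it immediately gives $\dim(\gamma(Z))=\dim(Z)=r=\dim(L)$, hence $\gamma(Z)=L$ by irreducibility of $L\cong\mathbb{P}^r$. Note also that your fallback sketch is internally flawed: even if $\gamma|_Z$ had positive--dimensional fibres, there is no reason an $A$--orbit $D+A\subseteq Z$ (of dimension $a$, in general unrelated to the fibre dimension) should lie inside a single fibre of $\gamma$ --- the Gauss map is not constant along translates by $A$ --- so the contradiction you invoke does not follow from your premises.
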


 \begin{proof}  Since $Z$ is $A$--stable, one has $\gamma(Z)\subseteq L$.  A point of $\Theta_{\rm sm}$ can be identified with a divisor $D$ of degree $g-1$ such that $h^ 0(\mathcal O_C(D))=h^ 0(\mathcal O_C(K_C-D))=1$.
 The Gauss map sends $D\in \Theta_{\rm sm}$ to the unique divisor of $\vert K_C\vert$ containing $D$.  Then $\gamma^ {-1}(\gamma(D))$ is finite. Hence $\dim(\gamma(Z))=\dim(Z)=r=\dim (L)$. The assertion follows. \end{proof}

 As in  \cite {df}, formula \eqref {eq:DF} for $(d,s)=(g-1,0)$ follows from the argument in the proof of Lemma \ref {lem:im}, and  the general case follows from this (see \S \ref {ssec:GBN} below).

Note
the birational involution $\sigma\colon  \Theta\dasharrow \Theta$,  defined on $\Theta_{\rm sm}$, sending a divisor $D\in \Theta_{\rm sm}$ to the unique effective divisor $D'\in \vert K_C-D\vert$. Then $\sigma$ restricts on  $Z$ to a birational map (still denoted by $\sigma$)  onto its image $Z'$, which is also $A$-stable. 

\begin{lem}\label{lem:free}
 The linear series $L$ is base point free.
\end{lem}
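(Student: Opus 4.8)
The plan is to argue by contradiction, reducing a hypothetical base point to a violation of the basic inequality \eqref{eq:DF}. So suppose $p\in C$ is a base point of $L$, meaning that $p$ lies on every divisor $H\in L$. The starting observation is that for a general $D\in Z$ the Gauss image $H:=\gamma(D)$ lies in $L$ (by Lemma \ref{lem:im}), and in fact $H=D+\sigma(D)$: indeed $\sigma(D)$ is by definition the effective divisor in $\vert K_C-D\vert$, so $D+\sigma(D)$ is the unique member of $\vert K_C\vert$ containing $D$, which is precisely $\gamma(D)$. Hence $p\in H=D+\sigma(D)$ forces $p$ to lie on $D$ or on $\sigma(D)$.

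First I would rule out that $p$ lies on $D$ for general $D\in Z$. If it did, then $Z\subseteq p+W_{g-2}(C)$, so the translate $Z-p$ would be an irreducible $A$--stable subvariety of $W_{g-2}(C)\subsetneq J^{g-2}(C)$ of dimension $r=g-1-a$. Applying \eqref{eq:DF} to $Z-p$ (with $d=g-2$ and $s=0$) would give $r+a\leqslant g-2$, that is $g-1\leqslant g-2$, which is absurd. Thus $p\notin D$ for general $D\in Z$.

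The symmetric statement, that $p\notin\sigma(D)$ for general $D$, follows by the same argument applied to $Z'=\sigma(Z)$. Recall $Z'$ is again irreducible, $A$--stable, and of dimension $r$, and moreover $\gamma(Z')=L$ because $\gamma\circ\sigma=\gamma$ on $\Theta_{\rm sm}$ (both $D$ and $\sigma(D)$ are contained in the same canonical divisor $H$). So the previous paragraph, applied verbatim to $Z'$, yields $p\notin\sigma(D)$ for general $D$. Combining the two conclusions, for general $D$ neither $D$ nor $\sigma(D)$ contains $p$, whence $p\notin D+\sigma(D)=H$; this contradicts $p$ being a base point of $L$. Therefore $L$ is base point free.

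I do not expect a serious obstacle here: once the reduction is set up, the contradiction is immediate from \eqref{eq:DF}, which is already available. The only points needing a little care are the identification $H=D+\sigma(D)$ (so that a base point of $L$ must sit on $D$ or its residual) and the $\sigma$--invariance of $\gamma$, which is exactly what makes the situation symmetric under residuation and lets the single estimate on $Z-p$ do double duty via $Z'-p$.
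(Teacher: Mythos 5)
Your proof is correct and takes essentially the same approach as the paper's: the hypothetical base point $P$, combined with the identity $\gamma(D)=D+\sigma(D)$, forces either $Z$ or $Z'=\sigma(Z)$ to translate by $-P$ into $W_{g-2}(C)$ as an $A$--stable variety of dimension $r$, contradicting \eqref{eq:DF}. The paper merely packages this as a dichotomy (either every $D\in Z$ contains $P$, or the general $D$ does not and then $\sigma(D)$ must), whereas you rule out both cases separately and contradict base-point-ness directly; the ingredients are identical.
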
 
\begin{proof}
Suppose  $P\in C$ is a base point of $L$. 
If every $D\in Z$ contains $P$, then the map $D\to D-P$ defines an injection  $Z\into W_{g-2}(C)$, contradicting   \eqref {eq:DF}.
If  the general $D\in Z$ does not contain $P$, then $\sigma(D)$ contains $P$ for  $D\in Z$  general, and we can apply the previous  argument to  $Z'$.
\end{proof}

\begin{lem}\label{lem:nobir} One has:\\
\begin{inparaenum} 
\item $Z$ is a component of $\Gamma(g-1, L)$.\\
\item $Z\subsetneq \Gamma(g-1, L)$.\\
\item  The linear series $L$ is not birational. 
\end{inparaenum}
\end{lem}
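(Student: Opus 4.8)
The plan is to treat the three assertions in sequence, since they build on one another: (i) locates $Z$ inside $\Gamma(g-1,L)$ as a component, (ii) shows the containment is strict by playing the $A$--stability of $Z$ against the \emph{non}--$A$--stability of $\Gamma(g-1,L)$, and (iii) then follows formally from the irreducibility criterion for $C(g-1,L)$ in the birational case. So the whole weight of the argument rests on (ii), and (i) and (iii) are bookkeeping around it.

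For (i) I would invoke Lemma \ref{lem:im}. A general point $D\in Z$ lies in $\Theta_{\mathrm{sm}}$, hence is represented by an effective divisor of degree $g-1$ with $h^0(\mathcal O_C(D))=1$, and its Gauss image $\gamma(D)$ is the unique canonical divisor containing $D$. Since $\gamma(Z)=L$ by Lemma \ref{lem:im}, this divisor lies in $L$, so $D\leqslant \gamma(D)$ with $\gamma(D)\in L$, i.e. $D\in C(g-1,L)$; passing to divisor classes and taking closures gives $Z\subseteq \Gamma(g-1,L)$. Because $a>0$ we have $k=g-1>r$, so Lemma \ref{lem:irreducible}, (ii) bounds $\dim C(g-1,L)=r$, whence $\dim\Gamma(g-1,L)\leqslant r=\dim Z$. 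An irreducible subvariety of maximal dimension is a component, which is precisely (i), and this also shows $\dim\Gamma(g-1,L)=r$, i.e. $\rho=r$.

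For (ii) the key point is that $\Gamma(g-1,L)$ cannot be $A$--stable: were $Z=\Gamma(g-1,L)$, then $\Gamma(g-1,L)$ would inherit the $A$--stability of $Z$, and Corollary \ref{cor:gen} would force $\rho=g$, whereas here $\rho=\dim\Gamma(g-1,L)=r=g-1-a<g$. This contradiction yields $Z\subsetneq\Gamma(g-1,L)$. Finally, for (iii), if $L$ were birational then $C(g-1,L)$, and hence its image $\Gamma(g-1,L)$, would be irreducible by Lemma \ref{lem:irreducible}, (iv); since by (i) $Z$ is a component of $\Gamma(g-1,L)$ of full dimension $r$, this would force $Z=\Gamma(g-1,L)$, contradicting (ii). Hence $L$ is not birational. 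The one genuine subtlety---the main obstacle, such as it is---lies in recognizing that Corollary \ref{cor:gen} is exactly the tool that converts $A$--stability into the dimension equality $\rho=g$; once that is in hand the remainder is formal.
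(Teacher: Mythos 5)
Your proof is correct and follows essentially the same route as the paper: Lemma \ref{lem:im} gives $Z\subseteq\Gamma(g-1,L)$ with $\dim\Gamma(g-1,L)=r$ (the paper cites Lemma \ref{class} for this, while you deduce the bound from Lemma \ref{lem:irreducible}, (ii), which works equally well since $g-1>r$), Corollary \ref{cor:gen} turns $A$--stability of $\Gamma(g-1,L)$ into the contradiction $\rho=g$ for (ii), and Lemma \ref{lem:irreducible}, (iv) gives (iii). No gaps.
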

\begin{proof} By Lemma \ref {lem:im}, we have $Z\subseteq\Gamma(g-1, L)$ and the dimension of $\Gamma(g-1, L)$ is equal to $r$ (by Lemma \ref {class}).  So (i) holds. If $Z= \Gamma(g-1, L)$, then $\Gamma(g-1, L)$ is $A$-stable. By 
 Corollary \ref  {cor:gen}, we  have $r=g$, a contradiction. This proves (ii). Then (iii) holds by Lemma \ref{lem:irreducible}, (iv). 
 \end{proof}
 
Recall the notation $\phi_L\colon C\to \bar C\subset \pp^r$,  $C'$ for the normalization of  $\bar C$,  $f\colon C\to C'$ the induced morphism and  $\nu=\deg(f)>1$. For any integer $h$ one has the morphism $f^ *\colon J^ h(C')\to J^ {h\nu }(C)$. Let  $g'$ be the genus of $C'$ and $L'$ the (birational) linear series on $C'$ of dimension $r$ such that $L=f^*(L')$, whose degree is $\delta=\frac {2g-2}\nu$.

 \begin{lem} \label{lem:key}
 Assume $\nu \leqslant 3$ and let  $\underline m=(1^ {\mu_1}, \dots, \nu^ {\mu_\nu})$ be the partition of $g-1$ such that $Z\subseteq \Gamma(\underline m, L)$. Set $\mu_1=\mu$, $\mu_2=\mu'$. Then  $\nu=2$ and there are the following possibilities:\\
 \begin{inparaenum}
 \item $\mu=g-1$ (hence $\mu'=0$);\\
 \item $\mu=g-1-2a$, $\mu'=a$, $A=f^*(J(C'))$ (hence $g'=a$) and $Z=W_{\mu}(C)+f^*(J^a(C'))$.
 \end{inparaenum}
\end{lem}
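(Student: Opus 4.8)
The plan is to analyze the structure of the partition $\underline m$ governing the component $Z$ of $\Gamma(g-1,L)$, using the arithmetic constraints coming from the fact that $\dim(Z)=r=g-1-a$ is maximal and that $Z$ is $A$-stable. Recall from Lemma \ref{lem:reducible} that $Z\subseteq\Gamma(\underline m,L)$ for a partition $\underline m=(1^{\mu_1},\dots,\nu^{\mu_\nu})$ of $g-1$ with $\sum i\mu_i=g-1$ and $s:=\sum_i\mu_i\geqslant r$. Since $L$ is non-birational (Lemma \ref{lem:nobir}) and $\nu\leqslant 3$, the first task is to rule out $\nu=3$. I would compute the degree $\delta=\frac{2g-2}{\nu}$ of the birational series $L'$ on $C'$, and note that the general fibre of $f$ has $\nu$ points; the divisor $D\in Z$ meets each of the $\delta$ fibres comprising a general $H\in L$ in a prescribed pattern recorded by $\underline m$. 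The key numerical identity is $g-1=\sum_{i=1}^\nu i\mu_i$ together with $\sum_i\mu_i=s$ and $\mu_0=\delta-s\geqslant 0$.

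\textbf{Eliminating $\nu=3$ and the shape of $\underline m$.} The decisive input is $A$-stability: by Lemma \ref{lem:nobir}(ii) the component $Z$ is a \emph{proper} subset of $\Gamma(g-1,L)$, so $\Gamma(\underline m,L)$ (or $\Gamma(g-1,L)$) must have at least one other component, necessarily also of dimension $r$. I expect that the birational map $\mathfrak r$ of Lemma \ref{lem:irreducible}(iii) sending $C(g-1,L)$ to $C(d-(g-1),L)=C(g-1,L)$ (since $d=2g-2$, so $d-k=k$) carries $Z$ to the component $Z'$ attached to the complementary partition $\underline m^c$, which matches the residuation involution $\sigma$ already considered. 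The point is then to exploit that $Z$ is $A$-stable, where $A=\varphi^*(J(C'))$ has tangent space identified with $T$ and $L=\PP(T^\perp)$. Adding an element of $A$ to $D\in Z$ should correspond to moving within fibres of $f$ in a way compatible with $\underline m$; this forces the pattern to be "uniform'' enough that only very special partitions survive. Concretely, I would show that for $\nu=3$ the constraint $\sum_i\mu_i\geqslant r=g-1-a$ combined with $g-1=\mu_1+2\mu_2+3\mu_3$ and the requirement that the component be $A$-stable of the maximal dimension leaves no room, forcing $\mu_3=0$, and then that $\mu_2$ and $\mu_3$ cannot both contribute, reducing to $\nu=2$.

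\textbf{Identifying the two cases when $\nu=2$.} Once $\nu=2$, the partition is $\underline m=(1^\mu,2^{\mu'})$ with $\mu+2\mu'=g-1$ and $s=\mu+\mu'\geqslant r=g-1-a$, whence $\mu'\leqslant a$. I would first handle the extreme $\mu'=0$: then $\mu=g-1$, which is case (i). Otherwise $\mu'>0$, and I would argue that $A$-stability pins down $\mu'=a$ exactly. The mechanism is that the "doubled'' points (those counted with multiplicity $2$, i.e., full fibres of $f$) are precisely the part of the divisor that can be moved freely by $\varphi^*(J(C'))$: a general $D\in Z$ decomposes as $D=D_0+f^*(E)$ where $D_0$ is supported on $\mu$ distinct fibres (one point each) and $E\in C'(\mu')$, and translating by $A=\varphi^*(J(C'))$ moves the class of $E$ in $J(C')$. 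For $Z$ to be $A$-stable of dimension $r=g-1-a$ with $A$ of dimension $a=g'$, the moving part must account for exactly $a=\dim A$ dimensions, giving $\mu'=a$, $g'=a$, and $Z=W_\mu(C)+f^*(J^a(C'))$ with $\mu=g-1-2a$. This is case (ii), and simultaneously shows $A=f^*(J(C'))$.

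\textbf{Main obstacle.} The hard part will be the rigorous elimination of $\nu=3$ and, within $\nu=2$, the exclusion of intermediate values $0<\mu'<a$: both require translating the \emph{geometric} $A$-stability into a sharp constraint on the partition, rather than just the dimension count $s\geqslant r$ which by itself permits a range of $\mu'$. I expect to need that $A$ is a \emph{maximal} abelian subvariety stabilizing $Z$ (noted after \eqref{eq:DF}) together with the fact that the Abel--Jacobi image of the $\mu'$ doubled fibres generates exactly $f^*(J(C'))$ and no larger abelian subvariety; matching $\dim A=a$ to the number of free parameters $\mu'=\dim f^*(J(C'))=g'$ is what forces $g'=a$ and simultaneously identifies $A=f^*(J(C'))$. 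Making this identification of the generated abelian subvariety precise — likely via Corollary \ref{cor:gen} applied to the $f^*(J(C'))$-stable piece and the fact that the non-doubled part $W_\mu(C)$ contributes the complementary $\mu=r-a$ dimensions without enlarging the stabilizer — is the technical crux.
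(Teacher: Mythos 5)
Your plan correctly identifies where the difficulty lies (pinning down $\mu'$ and identifying $A$ with $f^*(J(C'))$), but as written it has two genuine gaps, one of them fatal. First, the argument is circular: both in your $\nu=3$ discussion and in the $\nu=2$, $\mu'>0$ case you translate by ``$A=\varphi^*(J(C'))$'', but the equality $A=f^*(J(C'))$ is precisely part of the conclusion; at the outset $A$ is an arbitrary abelian subvariety with $Z$ $A$-stable, and nothing yet relates it to $C'$. The paper's mechanism for breaking this circle is the forgetful rational map $h\colon Z\dasharrow J^{\mu}(C)$, $D=M+f^*(N)\mapsto M$ (using that the general $D\in Z$ is linearly isolated and reduced, hence decomposes uniquely): when $\mu\leqslant r$ the image of $h$ is $W_{\mu}(C)$, which is birational to $C(\mu)$, a variety of general type since $\mu<g$, so the image of $A$ under $h$ must be $\{0\}$; this is what forces $A$ to act along the fibres of $h$, which are then identified (up to translation) with $\Gamma_{C'}(\mu',L'')$, $L'':=L'(-f_*(M))$, and only then does Corollary \ref{cor:gen} apply, giving $\Gamma_{C'}(\mu',L'')=J^{\mu'}(C')$ and $g'\leqslant r-\mu$. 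Combined with $\mu'\leqslant g'$ (which comes from $h^0(\mathcal O_C(f^*(N)))=1$, again by linear isolation) and the maximality of $a$, this yields $\mu'=g'=a$ and $A=f^*(J(C'))$. Without some such argument, your ``mechanism'' paragraph assumes what it must prove. You also misplace case (i): it does not come from the trivial observation ``$\mu'=0\Rightarrow\mu=g-1$'' but from the subcase $\mu>r$, which the paper handles via \cite[Lemma 3.1]{df} ($g-1\geqslant\mu\geqslant\dim(\bar Z)+\dim(\bar A)=r+a=g-1$); and the boundary subcase $r-\mu=1$, where $L''$ need not be birational, requires a separate geometric argument (projection from the point $p_A$, image a normal elliptic curve).

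Second, the elimination of $\nu=3$ cannot ``follow from the numerics plus $A$-stability leaving no room''. The numerical constraints you list ($\mu_1+2\mu_2+3\mu_3=g-1$, $\sum_i\mu_i\geqslant r$, $\sum_i\mu_i\leqslant\delta$) are satisfiable when $\nu=3$: for instance $g-1=12$, $a=4$, $r=8$ admits $(\mu_1,\mu_2,\mu_3)=(4,4,0)$ or $(5,2,1)$. The paper kills $\nu=3$ by a genuinely geometric argument: writing the general $D=M_1+M_2+f^*(N)$ with $f_*(M_2)=2M'_2$, one considers the twisted map $D\mapsto M_1+f^*(M'_2)-M_2$ into $J^{\tau}(C)$, $\tau=\mu+\mu'$, runs the same general-type and Corollary \ref{cor:gen} machinery on its image and fibres to get $\mu_3=r-\tau=g'=a$, and then the two identities $g-1-a=\mu+\mu'+\mu_3$ and $g-1=\mu+2\mu'+3\mu_3$ force $\mu'+2\mu_3=a=\mu_3$, i.e.\ $\mu'=\mu_3=0$, contradicting $a>0$; the remaining subcase $\tau>r$ is excluded because the cited lemma would give $\tau=g-1$, against $\tau\leqslant\deg(L')=\tfrac{2}{3}(g-1)$. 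None of this apparatus appears in your proposal, so the $\nu=3$ case is simply open in your approach.
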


 \begin{proof} 
 We first consider the case $\nu=2$.

 If $\mu=0$, then $g-1=2\mu'$  and  $Z=f^*(\Gamma_{C'}(\mu',L'))$, since $\Gamma_{C'}(\mu',L')$ is irreducible by Lemma \ref{lem:irreducible}.    The general point of $Z$ corresponds to a linearly isolated divisor, hence $\mu'\leqslant g'$.
 Since $f^ *\colon J^ {\mu'}(C')\to J^ {g-1}(C)$ is finite, one has $r=\rho_{C'}(\mu',L')\leqslant \mu'\leqslant g'$. Since $Z$ is $A$--stable, we have an isogeny $A\to \bar A\subseteq J(C')$ (so that $a\leqslant g'$) and $\Gamma_{C'}(\mu',L')$ is $\bar A$--stable.  
 Hence by  Corollary \ref {cor:gen} one has $\Gamma_{C'}(\mu',L')=J^ {\mu'}(C')=W_{\mu'}(C')$ and $g'\leqslant \min\{\mu', r\}$ (see Lemma \ref {lem:class2}, (i)). Then  $Z$ is $f^*(J(C'))$--stable,  and, since $a$ is the maximal dimension of an abelian subvariety of $J(C)$ for which $Z$ is stable
  and  $a=\dim (\bar A)\leqslant g'$, it follows   $a=g'$. In conclusion $\mu'=a=r=g'$, and we are in case (ii). 
 
 Assume now  $\mu>0$. 
 The general  point  of $Z$ (which is a component $\Gamma(\underline m,L)$)  is smooth for $\Theta$, hence it  corresponds to a linearly isolated, effective divisor $D$ of degree $g-1$, which is reduced (see Lemma \ref {lem:irreducible}, (ii)) and  can be written in a unique way  as $D=M+f^*(N)$, where $M$ and $N$ are effective divisors, with $\deg(M)=\mu$, $\deg (N)=\mu'$ and $M':=f_*(M)$   reduced.  
  So there is a rational map $h\colon Z \dasharrow J^{\mu}(C)$ defined by $D=M+f^*(N)\mapsto M$.
  
  Assume $\mu\le r$. By Lemma \ref{lem:irreducible} the image of $h$ is $W_{\mu}(C)$. Identify $A$ with its general translate inside $Z$. Then we have a morphism  $h|_A\colon  A\to J(C)$ whose image we denote by $\bar A$. Then  $W_{\mu}(C)$ is $\bar A$--stable. 
 Since  $W_{\mu}(C)$ is birational to $C(\mu)$, which is of general type because  $\mu<g$, then $\bar A=\{0\}$. It follows that  each component of the general fibre of $h$ is $A$--stable, in particular $r-\mu\geqslant a\geqslant 1$.
  
Take $M\in W_{\mu}(C)$ general and set $L'':=L'(-M')$. Since $L'$ is birational and $M'\in W_\mu(C')$ is general, with $\mu< r=\dim (L')$, then $L''$ has dimension $r-\mu\geqslant 1$ and it is base point free. It is also birational as soon as $r-\mu\geqslant 2$. 

Assume first $r-\mu\geqslant 2$. Then $C'(\mu',L'')$ is irreducible by Lemma \ref {lem:irreducible}, (iv), and  there is a  birational morphism  $C'(\mu',L'')\to h^ {-1}(M)\subset J^{g-1}(C)$  factoring through the map $C'(\mu',L'')\to J^ {\mu'}(C')\into J^{g-1}(C)$, where the last inclusion is  translation by $M$. Namely, up to a translation, $h^{-1}(M)=\Gamma(\mu',L'')$. In particular, $\dim (\Gamma(\mu', L''))=\dim (C(\mu', L''))=r-\mu$, hence $\mu'\ge r-\mu$.
  
Remember that $h\inv(M)=\Gamma(\mu', L'')$ is $A$--stable for $M\in W_{\mu}(C)$ general. By Corollary \ref {cor:gen} one has $\Gamma_{C'}(\mu',L'')=W_{\mu'}(C')=J^ {\mu'}(C')$, so $g'\leqslant \min\{\mu', r-\mu\}=r-\mu$. On the other hand, since $h^ 0(\mathcal O_C(D))=1$, one has also  $h^ 0(\mathcal O_C(f^ *(N)))=1$, hence $\mu'\leqslant g'$ and we conclude  
that  $\mu'=g'=r-\mu$.
 The same argument as above yields $a=g'$ and we are again in case (ii). 

If $r-\mu=1$, then $a=1$, $r=g-2$, $\mu=g-3$ and $\mu'=1$. On the other hand  $L$ is cut out on the canonical image of $C$ by the hyperplanes through the point $p_A$ which is the projectivized tangent space to $A$ at the origin. Then $\phi_L: C\to  \bar C$ is the projection from $p_A$, $\bar C$ it is a normal elliptic curve, and we are again  in case (ii).

Assume now $\mu>r$ and keep the above notation. In this case the map $h\colon  Z\dasharrow \bar Z:=h(Z)$ is generically finite, $\bar A$ is isogenous to $A$ and $\bar Z$ is  $\bar A$--stable. By  \cite[Lemma 3.1] {df}, we have $g-1\geqslant \mu\geqslant \dim (\bar Z)+\dim (\bar A)=r+a=g-1$, thus $\mu=g-1$, so we are in case (i).

Finally consider the case $\nu=3$.   Write the general $D\in \Gamma(\underline m, L)$ as $D=M_1+M_2+f^*(N)$,   where $M_1$ is reduced of degree $\mu$, $f_*(M_2)=2M'_2$ with $M'_2$ reduced of degree $\mu'$ and, as above, $\mu_3=\deg(N)\leqslant g'$. Set $\tau=\mu+\mu'$
 and consider the rational map $h\colon Z \dasharrow W_{\tau}(C)\subseteq J^{\tau}(C)$ defined by $D\mapsto M_1+f^*(M'_2)-M_2$.

If $\tau\leqslant r$,  arguing as above (and keeping a similar notation) one sees that $h(Z)=W_{\tau}(C)$, the general fibre of $h$ is $A$--stable, hence $r-\tau\geqslant a$. However $a=1$ and $\phi_L$ non--birational, forces, as we have seen, $\nu=2$, which is not the case here. Hence we have $a\geqslant 2$. We consider now $L''=L'(-f_*(M_1)-M'_2)$, which has dimension $r-\tau\geqslant 2$ and is base point free and birational, so  $C'(\mu_3,L'')$ is irreducible. 
The general fiber $h^ {-1}(h(D))$ of $h$ is isomorphic to $\Gamma_{C'}(\mu_3, L'')$ and is $A$--stable. In particular $\dim(\Gamma_{C'}(\mu_3, L''))=r-\tau$, hence $\mu_3\ge r-\tau$. By Corollary \ref {cor:gen}, we have $\Gamma_{C'}(\mu_3, L'')\cong W_{\mu_3}(C')\cong J^ {\mu_3}(C')$, so $g'\leqslant \min\{\mu_3,r-\tau\}=r-\tau$, hence $g'\le r-\tau \leqslant \mu_3\leqslant g'$, thus $\mu_3=r-\tau=g'$. In addition, 
as above, we have $a=g'$, so that $A=f^*(J(C'))$. 
Then $g-1-a=r= \tau+\mu_3=\mu+\mu'+\mu_3$. On the other hand $g-1=\mu+2\mu'+3\mu_3$. This yields $\mu'+2\mu_3= a= \mu_3$, hence $\mu'=\mu_3=0$, which is not possible. 

If $\tau>r$ then $h\colon  Z\dasharrow \bar Z:=h(Z)$ is generically finite, $\bar A$ is isogenous to $A$ and $\bar Z$ is  $\bar A$--stable. By  \cite[Lemma 3.1] {df}, we have $g-1\geqslant \tau\geqslant \dim (\bar Z)+\dim (\bar A)=r+a=g-1$, thus $\tau=g-1$, contradicting $\tau\leqslant  \deg (L')=\frac 2 3(g-1)$. \end{proof}

  \begin{lem}\label{lem:degL}  If $\nu\geqslant 4$ then $\nu=4$ and
 either\\
 \begin{inparaenum} [(i)]
 \item there is a degree 2 map $\psi\colon C\to E_1$ with $E_1$ a genus $r$ hyperelliptic curve such that $Z=A=\psi^*(J^ r(E_1))$; or\\
 \item there is a faithful $\Z_2^2$--action on $C$ with rational quotient;  denoting by  $f_i\colon C\to E_i$ (for $1\leqslant i\leqslant 3$) the quotient map for the three non--trivial involutions of $\Z_2^2$, with $E_i$ of genus $g_i$, and $g_1\geqslant g_2\geqslant g_3$, then $g_1=r+1$, $g_2+g_3=r$ and  $Z=A=f_2^*(J^ {g_2}(E_2))\times f_3^*(J^ {g_3}(E_3))$ is the Prym variety associated to $f_1$.
 \end{inparaenum}
 
 \end{lem}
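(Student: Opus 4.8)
The plan is to first force $\nu=4$ and $Z\cong A$ by a short numerical argument, then pin down the partition by a peeling map, and finally identify the geometry through the monodromy of a $g^1_4$. Throughout, let $\underline m=(1^{\mu_1},\dots,\nu^{\mu_\nu})$ be the partition of $g-1$ with $Z\subseteq\Gamma(\underline m,L)$, let $s=\sum_{i=1}^\nu\mu_i$ be its number of parts, and recall from the set-up of Lemma~\ref{lem:reducible} that $r\leqslant s\leqslant\delta=\frac{2g-2}{\nu}$. Since $\nu\geqslant 4$ we get $r\leqslant\delta\leqslant\frac{g-1}{2}$. As we are in the Theta divisor case with equality in \eqref{eq:DF}, one has $g-1=r+a$, so $r\leqslant\frac{r+a}{2}$, i.e. $r\leqslant a$; combined with $a\leqslant r$ (noted at the start of \S\ref{sec:absub}) this forces $a=r$, hence $Z\cong A$ is an abelian variety. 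Then $g-1=2r$ and $\delta=\frac{4r}{\nu}$, so $r\leqslant\delta$ gives $\nu\leqslant 4$, whence $\nu=4$; moreover $\delta=r$ and $r\leqslant s\leqslant\delta$ force $s=\delta=r$, i.e. $\mu_0=\delta-s=0$, so every fibre of $f$ occurring in a general $H\in L$ meets the corresponding $D\in Z$.

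Next I would determine the $\mu_i$. The equations $\sum_{i=1}^4 i\mu_i=2r$ and $\sum_{i=1}^4\mu_i=r$ subtract to $\mu_1=\mu_3+2\mu_4$. To kill $\mu_1$ I would peel off the reduced part exactly as in the proof of Lemma~\ref{lem:key}: writing a general $D\in Z$ as $D=M_1+(\text{rest})$ with $M_1$ the sum of the $\mu_1$ points appearing with multiplicity $1$, the rational map $h\colon Z\dasharrow W_{\mu_1}(C)$, $D\mapsto M_1$, has image all of $W_{\mu_1}(C)$ (here $\mu_1\leqslant s=r$), of dimension $\mu_1$. By Corollary~\ref{cor:gen}, $W_{\mu_1}(C)$ cannot be stable under a positive-dimensional abelian subvariety unless $\mu_1\geqslant g$, so $h(A)=\{0\}$; hence the general fibre of $h$ is $A$-stable of dimension $r-\mu_1\geqslant a=r$, forcing $\mu_1=0$. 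Then $\mu_1=\mu_3+2\mu_4=0$ gives $\mu_3=\mu_4=0$ and $\mu_2=r$: a general $D\in Z$ meets each of the $r=\delta$ fibres of $f$ in $H\in L$ in exactly two points.

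At this point $L'$ is a birational $g^r_\delta$ with $\delta=r$, and a birational $g^r_r$ exists only on a rational curve, so $C'\cong\pp^1$ and $f\colon C\to\pp^1$ is a base point free $g^1_4$. I would now invoke Lemmas~\ref{lem:tetra} and \ref{lem:tetra2} for this $g^1_4$. Since $Z\subsetneq\Gamma(g-1,L)$ (Lemma~\ref{lem:nobir}), $\Gamma((2^r),L)$ and hence $C((2^r),L)$ is reducible; as the $r$ fibres vary freely and independently in $L'=|\OO_{\pp^1}(r)|$, the components of $C((2^r),L)$ correspond to multisets of $\Sigma$-orbits on the two-element subsets of a fibre, $\Sigma$ being the monodromy group of the $g^1_4$. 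Reducibility therefore forces $\Sigma$ to be a $2$-group (otherwise $\mathfrak A_4\subseteq\Sigma$ and $C(2,L)$, hence $\Gamma((2^r),L)$, would be irreducible by Lemma~\ref{lem:tetra}(a)); thus $G_L\in\{\Z_2,\Z_4,\Z_2^2\}$.

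Finally I would read off which component equals $A$. If $G_L\in\{\Z_2,\Z_4\}$ (case (b) of Lemma~\ref{lem:tetra}), only the component in which every fibre contributes the pair $\{x,\iota x\}$ for the central involution $\iota$ is abelian; it is $\psi^*(J^r(E_1))$ with $\psi\colon C\to E_1=C/\iota$ of degree $2$, and $E_1$ hyperelliptic (Lemma~\ref{lem:tetra2}(ii)) of genus $r$ (forced by $\dim Z=r$), giving (i). If $G_L=\Z_2^2$ (case (c)), a component is indexed by the numbers $(n_1,n_2,n_3)$ of fibres contributing pairs of each of the three types $\{x,\iota_i x\}$, with $\sum n_i=r$; such a component is abelian iff each $n_i$ is $0$ or $\geqslant g_i$, where $E_i=C/\iota_i$ has genus $g_i$ and $g=g_1+g_2+g_3$. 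Since $r<g$, not all $n_i\geqslant g_i$, so some $n_i=0$; then $\dim Z=\sum_{n_i>0}g_i=r$ forces exactly one $n_i$, the one of largest genus, to vanish, so (ordering $g_1\geqslant g_2\geqslant g_3$) $g_1=g-r=r+1$, $g_2+g_3=r$, and $Z=f_2^*(J^{g_2}(E_2))+f_3^*(J^{g_3}(E_3))$ is the Prym variety of $f_1$, which is (ii) (the degenerate case where two $n_i$ vanish gives $g_i=r$ and falls under (i)). The main obstacle I expect is precisely this last step: translating the local analysis of $C(2,L)$ for the $g^1_4$ into the global component structure of $\Gamma((2^r),L)$, singling out the unique $A$-stable abelian component, and matching the dimension bookkeeping to the exact Prym/product description in (ii).
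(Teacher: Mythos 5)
Your overall strategy is the paper's: force $\nu=4$, $Z=A$ and $\delta=r$ numerically, kill $\mu_1$ (hence $\mu_3,\mu_4$) by peeling off the reduced part, and then analyse the monodromy of the underlying $g^1_4$ via Lemmas \ref{lem:tetra} and \ref{lem:tetra2}. The first two steps are correct (your use of Corollary \ref{cor:gen} to get $h(A)=\{0\}$ is a harmless variant of the paper's observation that $W_{\mu_1}(C)$ cannot be an abelian variety). The first genuine gap is the bridge to the monodromy analysis: you claim that $Z\subsetneq\Gamma(g-1,L)$ (Lemma \ref{lem:nobir}) forces $\Gamma((2^r),L)$ to be reducible. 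This is a non sequitur: $\Gamma(g-1,L)$ is the union of the $\Gamma(\underline m,L)$ over \emph{all} admissible partitions, and for $r\geqslant 2$ it always has components belonging to partitions other than $(2^r)$ (e.g.\ $(1,2^{r-2},3)$), so Lemma \ref{lem:nobir}(ii) holds automatically and gives no information whatsoever on the component structure of $\Gamma((2^r),L)$ itself. What is actually needed is the paper's argument: if $C(2,\mathcal L)$ were irreducible, then $\Gamma((2^r),L)$ would be irreducible, hence equal to $Z$; but by Corollary \ref{cor:gen2} the image of $C(2,\mathcal L)$ generates $J^2(C)$, so $\Gamma((2^r),L)$ generates $J^{2r}(C)$, which is impossible because $Z=A$ is a translate of a proper abelian subvariety. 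Without this, your reduction to ``$\Sigma$ is a $2$-group'' is unsupported.

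The endgame is also materially incomplete. In case (b) you assert that the only abelian component is the one built from $E_1=C/\iota$, but dimension bookkeeping alone cannot exclude the component birational to $E_2(r_2)$ with $r_2=g_2$: for that one needs $g_2>r$, i.e.\ precisely the inequality $2g_2\geqslant g$ of Lemma \ref{lem:tetra2}(ii) --- the hardest part of that lemma, proved by the long Hurwitz computation --- which you never invoke for this purpose (the paper uses it both here and in the hyperelliptic case). Second, you never treat the case where $C$ is hyperelliptic and $\mathcal L$ is composed with the hyperelliptic involution: there Lemma \ref{lem:tetra2}(i) fails, one component of $C(2,\mathcal L)$ is contracted by $j$, and the correct conclusion (as in the paper) is that case (b) then leads to a contradiction rather than to item (i) with a rational $E_1$; your bookkeeping, taken literally, would not detect this. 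Finally, in case (c) the identification of the surviving component with the Prym variety of $f_1$ (which requires the isogeny $J(C)\sim J(E_1)\times J(E_2)\times J(E_3)$, proved in the paper via the character decomposition of $H^0(K_C)$ under $\Z_2^2$, together with the fact that $\iota_1$ acts as $-1$ on $A$) is asserted rather than proved; you flag this yourself as the expected obstacle, and it is indeed where part of the remaining work lies.
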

\begin{proof} 
Since $L$ is base point free by Lemma \ref{lem:free} and it is not birational by Lemma \ref{lem:nobir}, we have $\delta=\frac {2g-2}\nu\geqslant r\geqslant  \frac{g-1}{2}$, hence   $\nu\leqslant 4$.

Assume $\nu=4$. Then  $\bar C$ is a   curve of degree $\frac {g-1}{2}$ spanning a projective space of dimension $r\geqslant \frac{g-1}{2}$. Hence $r=a=\frac{g-1}{2}$, $Z=A$,  and $\phi_L=f$ is the composition of a $g^1_4$ (that we denote by $\mathcal L$) with the degree $r$ Veronese embedding $\pp^1\to \pp^r$.

Let $\underline m=(1^ {\mu_1},\ldots, 4^ {\mu_4})$ be the partition of $2r$ such that $Z\subseteq \Gamma(\underline m,L)$. 

\begin{claim}
One has $\underline m=(2^ r)$. 
\end{claim}

\begin {proof} [Proof of the Claim] Assume by contradiction this is not the case, so that one among $\mu_1,\mu_3,\mu_4$ is non--zero.   
We have  $r=\dim(\Gamma(\underline m,L))=\mu_1+\cdots+\mu_4$, 
because $\bar C$ is a rational normal curve of degree $r$ in $\pp^r$,   and $2r=g-1=\mu_1+2\mu_2+3\mu_3+4\mu_4$, hence
$\mu_1=\mu_3+2\mu_4$ and $\mu_1>0$. 
So we may write the general divisor $D\in Z$ as $D=M+N$, where $M':=f_*(M)$ is reduced of degree $\mu:=\mu_1$. 
Then we proceed as  in the proof of Lemma \ref{lem:key}.

Consider the rational map $h\colon  Z=A\dasharrow J^{\mu}(C)$ defined by $D=M+N\mapsto M$.  It extends to a morphism and $\bar A:=h(A)$ is an abelian variety contained in $W_{\mu}(C)$.  
Since $\mu\leqslant r=\frac{g-1}{2}$, one has $\bar A=W_{\mu}(C)$, which is  impossible.
\end{proof}

Let $E:=C(2,L)$. Assume first $C$ is not hyperelliptic. Then there is a birational (dominant) morphism $E(r)\to \Gamma((2^ r), L)$ (see Lemma \ref {lem:tetra2}, (i)). 
If $E$ is irreducible, then  by Corollary \ref {cor:gen2} its image generates $J^2(C)$, hence also  $\Gamma((2^ r), L)=Z$ generates $J^ {2r}(C)$ and we obtain a contradiction.

So $E$ is reducible and we apply Lemma \ref{lem:tetra} and  \ref{lem:tetra2}.  Suppose we are in case (b) of Lemma \ref{lem:tetra} and (ii) of Lemma \ref{lem:tetra2}. Then  $A=Z=\Gamma((2^ r), L)$ is birational to $E_1(r_1)\times E_2(r_2)$, for  non--negative integers $r_1,r_2$. However, if $r_i>0$, then $r_i=g_i$, for $i\in \{1,2\}$.
Since $2g_2\geqslant g$, one has $r_2=0$ because $2r=g-1<g$. Hence 
$A$ is birational to $E_1(g_1)$ and we are in case (i).

Consider now case (c) of Lemma \ref{lem:tetra} and (iii) of Lemma \ref{lem:tetra2}.   We claim that  $J(C)$ is isogenous to $J(E_1)\times J(E_2)\times J(E_3)$.  Indeed, consider the representation of $\Z_2^2$ on $H^0(K_C)$. Since  $C/\Z_2^2$ is rational, we have $H^0(K_C)=V_{\chi_1}\oplus V_{\chi_2}\oplus V_{\chi_3}$, where for $i=1,2,3$ the non trivial character $\chi_i$  of $\Z_2^2$ is  orthogonal to the involution $\iota_i$ such that $C/\iota_i\cong E_i$, and $\Z_2^2$ acts on $V_{\chi_i}$ as  multiplication by $\chi_i$. Thus $V_{\chi_i}$ is the tangent space to $f_i^*(J(E_i))$. 

Recall  that there are three non--negative integers $r_1,r_2,r_3$ such that $A$ is birational to $E_1(r_1)\times E_2(r_2)\times E_3(r_3)$. If $r_i>0$, then $r_i=g_i$ (for $1\leqslant i\leqslant 3$). Since $g_1+g_2+g_3=g=2r+1$, at least one of the integers $r_i$ is zero. If two of them are zero,  we are again in case (i). So assume  $r_1=0$ and $r_2,r_3$  non--zero. Then $r=g_2+g_3$ and $g_1=r+1$. Moreover  the involution $\iota_1$ acts on $A$ as multiplication by $-1$, hence $A$ is the Prym variety of $f_1\colon C\to E_1$, and we are in case (ii). 

The case $C$ hyperelliptic can be treated similarly. In case (b) of Lemma \ref{lem:tetra} and (ii) of Lemma \ref{lem:tetra2}, $E_1$ is rational (hence it is contracted to a point  by $j$), and 
$A=Z=\Gamma((2^ r), L)$ is birational to $E_2(r)$. Then $r=g_2$ and we reach a contradiction since $2g_2\geqslant g$.  In case 
(c) of Lemma \ref{lem:tetra} and (iii) of Lemma \ref{lem:tetra2} one has $g_3=0$ and the above argument applies with no change.\end{proof}

\begin{proof}[Proof of Theorem \ref{thm:main}]
By Lemma \ref{lem:degL}, we may assume $\nu=2$.
Let  $\underline m$  be the partition of $2r$ such that $Z\subseteq \Gamma(\underline m, L)$. By Lemma \ref{lem:key}, it is enough to consider 
 the case $\nu=2$ and   $\underline m=(1^ {g-1})$, i.e., case (i) of that lemma. Then $Z$ is contained in the kernel $P$ of $f_*\colon J^{g-1}(C)\to J^{g-1}(C')$ (namely $P$ is the generalized Prym variety associated with $f$). 
  The space $H^0(K_C)$ decomposes under the involution $\iota$ associated with $f$ as  $H^0(K_{C'})\oplus V$, where $V$ is the space of antiinvariant $1$-forms. Hence $V^*$ is the tangent space to $P$,   the tangent space $T$ to $A$ is also contained in $V^*$ and the linear series $L$ is equal to $\pp(T^{\perp})\supseteq \pp(H^0(K_{C'}))$. On the other hand, by construction $\iota$ acts trivially on $L$, hence $T=V^*$ and thus $A=P$.  This implies that $Z=A=P$, $g=2r+1$ and $f$ is unramified with $g'=r+1$.  \end{proof}
  
  \subsection{General Brill--Noether loci}\label{ssec:GBN} The proof of the general formula \eqref {eq:DF} in \cite{df} uses an argument which is  useful to  briefly recall.
 Let $Z\subseteq W^ s_d(C)\subsetneq J^ d(C)$ be $A$--stable and $L\in Z$ general, so $L$ is a special  $g_d^s$, which we may assume to be complete, so $d\geqslant 2s$. Let $F_L$ be the fixed divisor of $L$ and, if $s>0$, 
 let $L'$ be the base point free residual linear series. Set $d'=\deg(L')$. Since $L'$ is also special, we have $d'\geqslant 2s$.  Then, by Lemma \ref {lem:tetra}, (iii), one has the birational map $\mathfrak r\colon  C(d'-s,L')\dasharrow C(s,L')\cong C(s)$. 

Consider the  morphism $j\colon  C(d'-s,L')\times C(g-1-d+s)\to J^{g-1}(C)$ such that $j(D,D')$ is the class of $D+F_L+D'$ (if $s=0$, we define $j\colon C(g-1-d)\to J^{g-1}(C)$ by $D'\mapsto F_L+D'$).
  If $(D,D')$ is general in $C(d'-s,L')\times C(g-1-d+s)$, the divisor $D+F_L+D'$ is linearly isolated, hence $j$  is generically finite onto its image $Z_L$, which  therefore has dimension $g-1-d+2s$. Consider the closure $Z'$ of  the union of the  $Z_L$'s, with $L\in Z$ general, which is $A$--stable. One has $Z'\subseteq \Theta$ and the above discussion yields $r':=\dim(Z')=r+g-1-d+2s$. Therefore $r'+a\leqslant g-1$ if and only if  \eqref {eq:DF} holds, with equality if and only if  equality holds in \eqref {eq:DF}.  
  
  \begin{cor}\label{for:ten} Let $C$ be a curve of genus $g$. Let $A\subsetneq J(C)$ be an abelian variety of dimension $a>0$ and $Z\subseteq W^ s_d(C)
  \subsetneq J^ d(C)$ an irreducible,  $A$--stable variety of dimension $r=d-2s-a$.
 Assume $(d,s)\neq (g-1,0)$. Then 
  there is a degree 2 morphism $\varphi\colon C\to C'$, with $C'$ a smooth curve of genus $a$ with $g>2a+1$, such that 
$A=\varphi^*(J(C'))$ and either\\
\begin{inparaenum}[(i)]
\item $Z=W_{d-2a-2s}(C)+\varphi^*(J^{a+s}(C'))$, or\\
\item $C$ is hyperelliptic and $Z=\varphi^*(J^{a}(C'))+W_{d-2s-2a}(C)+ W^ s_{2s}(C)$. 
\end{inparaenum}
\end{cor}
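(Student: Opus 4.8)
The plan is to reduce Corollary \ref{for:ten} to the already-established Theorem \ref{thm:main} by means of the reduction described in \S\ref{ssec:GBN}. First I would take $Z\subseteq W^s_d(C)$ as in the statement, with $r=d-2s-a$, and follow the construction of the associated variety $Z'\subseteq\Theta=W_{g-1}(C)$. Concretely, for a general complete $L\in Z$ one writes $L=F_L+L'$ with $L'$ base point free, and one forms the map $j$ sending $(D,D')\in C(d'-s,L')\times C(g-1-d+s)$ to the class of $D+F_L+D'$. As recalled, $Z'$ is $A$-stable, contained in $\Theta$, and has dimension $r'=r+g-1-d+2s$; since equality holds in \eqref{eq:DF} (because $r=d-2s-a$), one gets $r'=g-1-a$, so $Z'$ is exactly in the situation of Theorem \ref{thm:main}. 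Applying that theorem yields a degree $2$ morphism $\varphi\colon C\to C'$ falling into case (a) or case (b) there.

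Next I would rule out case (b) of Theorem \ref{thm:main} for the variety $Z'$ and show only case (a) can arise when $(d,s)\neq(g-1,0)$. In case (b) one has $Z'\cong A$ and $g=2a+1$, and $Z'$ is a very rigid object (a translate of the Prym variety, a single connected component of $\varphi_*^{-1}(K_{C'})$); the point is that $Z'$ was built as the image of a product $C(d'-s,L')\times C(g-1-d+s)$ via a generically finite map, and unless this product degenerates to something abelian the image cannot be a translate of an abelian subvariety. I expect that the inequality $g>2a+1$ asserted in the corollary is forced precisely here: were $g=2a+1$ we would be back in the excluded $\Theta$-case geometry, and for $(d,s)\neq(g-1,0)$ the fibers of $j$ and the factors $W_{d'-s}(C)$, $W^s_{d'}(C')$ genuinely contribute dimension, incompatible with case (b). Thus $Z'$ is of type (a): $g'=a$, $A=\varphi^*(J(C'))$, and $Z'=W_{g-1-2a}(C)+\varphi^*(J^a(C'))$.

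Having fixed $\varphi$ and the equality $A=\varphi^*(J(C'))$, I would then reverse the reduction to recover $Z$ itself. Knowing that the general $L\in Z$ has $A$-stable associated data and that $A=\varphi^*(J(C'))$ pins down, via Lemma \ref{lem:key} and the structure of $C(\underline m,L)$, how a general divisor in $Z$ decomposes relative to $\varphi$. Writing the general class in $Z$ as a sum of a part pulled back from $C'$ and a residual part on $C$, and using that $L'$ is residual to the fixed divisor and base point free, one matches $Z$ against $W_{d-2a-2s}(C)+\varphi^*(J^{a+s}(C'))$. The dichotomy between conclusion (i) and conclusion (ii) — the hyperelliptic alternative with the extra factor $W^s_{2s}(C)$, which is a point — comes from whether the residual series on $C'$ of degree giving the $W^s_{2s}$ contribution forces $C$ hyperelliptic; this is the analog, at the level of $C$ rather than $Z'$, of the hyperelliptic branch already met in Lemma \ref{lem:tetra2} and Lemma \ref{lem:degL}.

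The main obstacle, I expect, is the last paragraph: disentangling $Z$ from $Z'$. The reduction $Z\mapsto Z'$ loses information (it collapses the $g^s_d$ structure into a theta-divisor picture), so recovering the precise form of $Z$, and in particular distinguishing cases (i) and (ii) with the correct hyperelliptic hypothesis and the constraint $g>2a+1$, requires carefully tracking the fixed divisor $F_L$, the base point free residual $L'$, and the partition $\underline m$ governing the decomposition of a general divisor of $Z$ under $\varphi$. The algebra of matching dimensions ($d-2s-a=r$ against the degrees $d-2a-2s$, $a+s$, and $2s$) is routine, but verifying that the map $j$ is birational onto the relevant factor, rather than merely generically finite, and that no further components of $C(\underline m,L)$ intrude, is where the genuine care lies.
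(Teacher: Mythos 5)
Your opening move --- forming $Z'\subseteq\Theta$ via the construction of \S\ref{ssec:GBN}, computing $r'=r+g-1-d+2s=g-1-a$, and applying Theorem \ref{thm:main} --- is exactly the paper's reduction and is correct. The first genuine gap is in how you dispose of case (b) and where $g>2a+1$ comes from. The paper settles both points at once by pure numerics: since $r\geqslant a$ and the general $L\in Z$ is special (because $W^s_d(C)\subsetneq J^d(C)$ forces $s>d-g$), one has $2(a+s)\leqslant d=r+a+2s\leqslant g-1+s$, hence $2a\leqslant g-1$, and equality would force $s=0$, $r=a$, $d=g-1$; as $(d,s)\neq(g-1,0)$ this gives $2a<g-1$, i.e. $g>2a+1$, which kills case (b) because there one would need $a=r'=g-1-a$, i.e. $2a=g-1$. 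Your proposed geometric argument --- that the image of $C(d'-s,L')\times C(g-1-d+s)$ ``cannot be a translate of an abelian subvariety unless the product degenerates'' --- is not a proof, and as a heuristic it is suspect: in case (b) of Theorem \ref{thm:main} itself, $Z$ \emph{is} abelian and yet consists entirely of effective divisor classes, hence is swept out by precisely such Abel--Jacobi images. Your logic is also inverted: excluding case (b) does not by itself yield $g>2a+1$, since case (a) with $g=2a+1$ is not formally ruled out by that exclusion; the inequality has to be derived from the hypotheses first, as above.

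The second and more serious gap is the recovery of $Z$ and the source of the dichotomy (i)/(ii), which is the actual content of the corollary beyond the reduction; your third and fourth paragraphs defer this (``where the genuine care lies'') rather than doing it. The paper's argument for $s>0$ runs as follows: either the base-point-free part $L'$ of the general $L\in Z$ is composed with the involution $\iota$ of $\varphi$, which gives case (i), or it is not. In the latter case, comparing the general point $D+F_L+D'$ of $Z'$ with the known shape $Z'=W_{g-1-2a}(C)+\varphi^*(J^a(C'))$ forces the fixed divisor $F_L$ to contain the $a$ general fibres of $\varphi$, and forces $D$ to be a \emph{general} divisor of degree $d'-s$; but $D$ moves in $C(d'-s,L')$, whose dimension is $\min\{d'-s,s\}$ by Lemma \ref{lem:irreducible}, so $d'-s\leqslant s$, and Clifford's inequality $d'\geqslant 2s$ for the special series $L'$ then gives $d'=2s$. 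Thus $L'$ attains equality in Clifford's theorem, so it is either the canonical series (excluded by a degree count) or $C$ is hyperelliptic and $L'$ is the $s$-fold multiple of the $g^1_2$: that is exactly where conclusion (ii) and the hyperelliptic hypothesis come from. None of this appears in your sketch: you never derive $d'=2s$, never invoke the Clifford-equality classification, and instead point to Lemma \ref{lem:key}, which lives inside the proof of Theorem \ref{thm:main} in the theta-divisor setting and does not apply to $Z\subseteq W^s_d(C)$ directly. Finally, the difficulty you single out --- upgrading $j$ from generically finite to birational --- is not where the difficulty lies: generic finiteness is all the dimension count needs, and the identification of $Z$ is achieved by the fixed-divisor/Clifford argument just described.
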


\begin{proof} We keep the same notation as above. 

We apply Theorem \ref {thm:main}  to $Z'$. Since
$d=r+a+2s\geqslant 2(a+s)$ and $(d,s)\neq (g-1,0)$, then $2a<g-1$, hence case (b) of Theorem \ref {thm:main} does not occur for $Z'$. 
So we have a degree 2 morphism $\varphi\colon C\to C'$, with $C'$ a smooth curve of genus $a$ with $g>2a+1$, such that 
$A=\varphi^*(J(C'))$ and $Z'=W_{g-1-2a}(C)+\varphi^*(J^a(C'))$. The case $s=0$ follows right away and we are in case (i). So we assume $s>0$ from now on. 

If $L'$ is composed with the involution $\iota$ determined by $\varphi$, we are again in case (i). So assume $L'$ is not composed with $\iota$. The general $D\in C(d'-s,L')$ contains no fibre of $\varphi$ and the same happens for the general $D'\in C(g-1-d+s)$. Since $D+F_L+D'$ corresponds to the general point of $Z$, and contains exactly $a$ general fibres of $\varphi$, then $F_L$ has to contain these $a$ fibres, whose union we denote by $F$. Moreover, the description of $Z$ implies that $D+D'+(F_L-F)$ is a general divisor of degree $g-1-2a$, in particular $D$ is a general divisor of degree $d'-s$. But $D$ is also general in $C(d'-s,L')$, and this implies $d'-s\leqslant s$. On the other hand $d'\geqslant 2s$, hence $d'=2s$. Then, either $L'$ is the canonical series of $C$ or $C$ is hyperelliptic and $L'$ is the $s$--multiple of the $g^ 1_2$. However the former case does not occur since by construction $d'=\deg (L')\le g-1$,
hence  we are in case (ii).   \end{proof}

     \end{document}